\theoremstyle{definition}
\newtheorem*{Mainthm}{Theorem {\ref{main}}}
\newtheorem*{cormain}{Corollary {\ref{lmodcor}}}
\newtheorem*{thm*}{Theorem}
\newtheorem*{acknowledgements}{Acknowledgements}
\newtheorem{prop}{Proposition}[section]
\newtheorem{LM}[prop]{Lemma}
\newtheorem{thm}[prop]{Theorem}
\newtheorem{df}[prop]{Definition}
\newtheorem{cor}[prop]{Corollary}
\newtheorem{rem}[prop]{Remark}
\newtheorem*{rem*}{Remark}
\newcommand{\of}{\mathfrak{o}}
\newcommand{\p}{\mathfrak{p}}
\newcommand{\w}{\varpi}
\newcommand{\C}{\mathbb{C}}
\newcommand{\J}{\mathcal{J}}
\newcommand{\Z}{\mathbb{Z}}
\newcommand{\1}{\mathbf{1}}
\def\End{\mathrm{End}}
\def\GL{\mathrm{GL}}
\def\Hom{\mathrm{Hom}}
\def\dim{\mathrm{dim}}
\def\bJ{\mathbf{J}}
\def\Ql{\overline{\mathbb{Q}_{\ell}}}
\def\Aut{\mathrm{Aut}}
\def\Zl{\overline{\mathbb{Z}_{\ell}}}
\def\Fl{\overline{\mathbb{F}_{\ell}}}
\def\Mat{\mathrm{Mat}}
\def\Vect{\mathrm{Vect}}
\def\ind{\mathrm{ind}}
\def\Ind{\mathrm{Ind}}
\def\presuper#1#2%
\begin{document}

\title{Test vectors for local cuspidal Rankin--Selberg integrals}

\author{Robert KURINCZUK\footnote{Robert Kurinczuk, Heilbronn Institute for Mathematical Research, Imperial College London, United Kingdom. Email: robkurinczuk@gmail.com}, Nadir MATRINGE\footnote{Nadir Matringe, Universit\'e de Poitiers, Laboratoire de Math\'ematiques et Applications,
T\'el\'eport 2 - BP 30179, Boulevard Marie et Pierre Curie, 86962, Futuroscope Chasseneuil Cedex. Email: Nadir.Matringe@math.univ-poitiers.fr}}


\maketitle

\begin{abstract}
Let~$\pi_1,\pi_2$ be a pair of cuspidal complex, or~$\ell$-adic, representations of the general linear group of rank~$n$ over a non-archimedean local field~$F$ of residual characteristic~$p$, different to~$\ell$.  Whenever the local Rankin--Selberg~$L$-factor~$L(X,\pi_1,\pi_2)$ is nontrivial, we exhibit explicit test vectors in the Whittaker models of~$\pi_1$ and~$\pi_2$ such that the local Rankin--Selberg integral associated to these vectors and to the characteristic function of~$\of_F^n$ is equal to~$L(X,\pi_1,\pi_2)$. As an application we prove that the~$L$-factor of a pair of banal~$\ell$-modular cuspidal representations is the reduction modulo~$\ell$ of the~$L$-factor of any pair of~$\ell$-adic lifts.
\end{abstract}

\section{Introduction} 

The integral representation of local~$L$-factors, of pairs of complex irreducible representations 
of general linear groups over a non-archimedean local field~$F$, was developed in the fundamental paper~\cite{JPS2} of Jacquet--Piatetski-Shapiro--Shalika. These~$L$-factors are Euler factors which are the greatest common divisors, in a certain sense, of families of integrals~$I$ of Whittaker functions.  For~$n\geqslant m$, as a by-product of the definition, if~$\pi_1$ and~$\pi_2$ are irreducible smooth complex (or~$\ell$-adic) representations of~$\GL_n(F)$ and~$\GL_m(F)$ respectively with Whittaker models~$W(\pi_1,\psi)$ and~$W(\pi_2,\psi^{-1})$, extended to all irreducible representations via the Langland's classification, then it is known that there is a finite number~$r$ of Whittaker functions~$W_i\in W(\pi_1,\psi)$ and~$W_i'\in W(\pi_2,\psi^{-1})$, and a finite number of Schwartz functions~$\Phi_i$ on~$F^n$ if~$n=m$, such that the~$L$-factor~$L(X,\pi_1,\pi_2)$ can be expressed as~$\sum_{i=1}^r I(X,W_i,W_i')$, or~$\sum_{i=1}^r I(X,W_i,W_i',\Phi_i)$ when~$n=m$. A natural question which thus arises is whether one can find an explicit family of such \emph{test vectors}. 

A famous instance of test vectors is the essential vectors for generic representations (cf.~\cite{JPSConducteur}, \cite{JacquetCorrection}, \cite{Matringe}). It is shown in these references that these vectors are test vectors for~$L(X,\pi_1,\pi_2)$ when~$\pi_1$ is a generic representation of~$\GL_n(F)$,~$\pi_2$ is an unramified standard module of~$\GL_m(F)$, and~$n>m$.  

Interesting partial results have been obtained in~\cite{CogdellsStudent}, and, as indicated in~\cite{CogdellsStudent}, the theory of derivatives and its interpretation in terms of restriction of Whittaker functions (cf.~\cite{CP},~\cite{Matringe}) should reduce the general problem to the cuspidal case. Here, we establish the cuspidal case: that for pairs of cuspidal representations~$\pi_1$ and~$\pi_2$, we can choose~$r=1$, and moreover, we exhibit explicit test vectors, in the interesting case, whenever~$L(X,\pi_1,\pi_2)$ is not equal to one.  The fact that~$r$ can be chosen to be~$1$ when~$L(X,\pi_1,\pi_2)=1$, for any pair of irreducible representations~$\pi_1$ and~$\pi_2$ of~$\GL_n(F)$ and~$\GL_m(F)$, is explained in the proof of~\cite[Theorem~2.7]{JPS2} and follows from standard facts on Kirillov models. We do not provide completely explicit test functions in this case, possibly a quite technical problem, and we in fact do not consider this case in the sequel, as it is not needed for our application to reduction modulo~$\ell$. 

Before we state our main theorem, we explain our normalisation of Haar measure (Section \ref{SectionHaar}), as for our application to reduction modulo~$\ell$ some care needs to be taken with the normalisation.  Let~$\mathfrak{o}_F$ denote the ring of integers of~$F$ with unique maximal ideal~$\mathfrak{p}_F$, and let~$q$ denote the cardinality of the residue field~$\mathfrak{o}_F/\mathfrak{p}_F$ and~$p$ its characteristic.  We fix our Haar measure on~$\GL_n(F)$ to give the pro-$p$ unipotent radical~$K_n^1$ of~$\GL_n(\mathfrak{o}_F)$ volume one.  It will turn out that this is a good choice of normalisation for reduction modulo~$\ell$ for primes~$\ell$ not equal to~$p$ because~$K_n^1$ is a pro-$p$ subgroup.  In particular, the volume of any pro-$p$ subgroup of~$\GL_n(F)$ which occurs in our computation will be a power of~$q$.  

Now we state our main theorem. Let~$\pi_1$ and~$\pi_2$ be cuspidal complex, or~$\ell$-adic, representations of~$\GL_n(F)$ such that~$L(X,\pi_1,\pi_2)$ is non-trivial, so that~$\pi_2\simeq \chi\pi_1^\vee$ for some unramified character~$\chi$ of~$F^\times$.  Let~$e$ denote the common ramification index of~$\pi_1$ and~$\pi_2$ (see Section~\ref{SimpleTypes}). We denote by~$W_1$ and~$W_2$ the explicit Whittaker functions for~$\pi_1$ and~$\pi_2$, as constructed in~\cite{PS}, with respect to a suitable nondegenerate character of the standard maximal unipotent subgroup of~$\GL_n(F)$ and suitable maximal extended simple types in~$\pi_1$ and~$\pi_2$. 

\begin{Mainthm}
There is an integer~$r$ such that
\begin{align*}
I(X,W_1,W_2,1_{\of_F^n})&=(q-1)(q^{n/e}-1)q^r\frac{1}{1-(\nu(\w_F)X)^{n/e}}\\
&=(q-1)(q^{n/e}-1)q^r L(X,\pi_1,\pi_2).
\end{align*}
\end{Mainthm} 
The factor~$q^r$ occurs in our computation as a product of volumes, with respect to certain quotient measures, of quotients of pro-$p$ subgroups related to the groups of Bushnell--Kutzko \cite{BK93} in their explicit construction of~$\pi_1,\pi_2$.  Clearly, after our computation we could simply renormalise our measure by the factor~$(q-1)(q^{n/e}-1)q^r$ and under the new normalisation have an equality between the integral and the~$L$-factor, hence~$(W_1,W_2,1_{\mathfrak{o}_F^n})$ is a test vector in the sense described earlier.  However, it is important to keep track of these factors for our application to reduction modulo~$\ell$.

We now describe the proof of this theorem.  In Section \ref{PS-basis}, we carefully choose an appropriate basis of~$F^n$ and simple types in our cuspidal representations, so that the subgroup of~$\GL_n(F)$ defined by these simple types decomposes well with respect to the Iwasawa decomposition and satisfies some other important properties (see Proposition \ref{properties}). In Section \ref{SectWhittaker}, we analyse the support of the explicit Whittaker functions of Paskunas and Stevens in terms of this well chosen group (Proposition \ref{furtherproperties}). This preparation, which constitutes a substantial amount of the path to our main result, then allows us to compute the integral in Section \ref{Main}. 

Our interest in test vectors originated in the the study of~$\ell$-modular Rankin--Selberg~$L$-factors, for~$\ell\neq p$, as introduced in~\cite{KM14}. Let~$\pi_1$ and~$\pi_2$ be integral cuspidal~$\ell$-adic representations of~$\GL_n(F)$ and~$\GL_m(F)$, and~$\tau_1=r_{\ell}(\pi_1)$ and~$\tau_2=r_{\ell}(\pi_2)$ their reductions modulo~$\ell$, which are cuspidal~$\ell$-modular representations.  By \cite[Theorem 3.13]{KM14}, the local factor~$L(X,\tau_1,\tau_2)$ always divides~$r_\ell(L(X,\pi_1,\pi_2))$.  In particular,~$L(X,\tau_1,\tau_2)=r_\ell(L(X,\pi_1,\pi_2))$ whenever~$L(X,\pi_1,\pi_2)=1$.  Hence the interesting case, where a strict division can happen is when~$L(X,\pi_1,\pi_2)$ is not equal to~$1$, and, in particular,~$n=m$.  In~\cite{MZeta}, it was shown that for banal representations the~$\ell$-modular Godement--Jacquet~$L$-factor is equal to the reduction modulo~$\ell$ of the~$\ell$-adic Godement--Jacquet~$L$-factor. It is thus natural to ask:  if~$\pi_1$ and~$\pi_2$ are~$\ell$-adic integral cuspidal representations of~$\GL_n(F)$ with banal reductions~$\tau_1$ and~$\tau_2$, does one have~$L(X,\tau_1,\tau_2)=r_\ell(L(X,\pi_1,\pi_2))$? As a corollary of our main result on test vectors applied to~$\ell$-adic Rankin-Selberg integrals, we answer this question in the affirmative.  

\begin{cormain}
Let~$\tau_1$ and~$\tau_2$ be two banal cuspidal~$\ell$-modular representations of~$\GL_n(F)$, and~$\pi_1$ and~$\pi_2$ be any cuspidal~$\ell$-adic lifts, then
\[L(X,\tau_1,\tau_2)=r_\ell(L(X,\pi_1,\pi_2)).\]
\end{cormain}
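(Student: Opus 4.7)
The plan is to apply the main theorem to $\ell$-adic lifts $\pi_1,\pi_2$ of $\tau_1,\tau_2$, reduce the resulting identity modulo $\ell$, and compare it with the analogous computation performed directly in the $\ell$-modular setting. First, if $L(X,\pi_1,\pi_2)=1$ then the cited divisibility $L(X,\tau_1,\tau_2)\mid r_\ell L(X,\pi_1,\pi_2)=1$ forces $L(X,\tau_1,\tau_2)=1$, and the statement is trivial. So assume $\pi_2\simeq\chi\pi_1^\vee$ for some unramified character $\chi$; integrality of the $\pi_i$ ensures $\chi(\w_F)\in\Zl^\times$.

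The key intermediate step is to verify that the Paskunas--Stevens Whittaker functions $W_1,W_2$ attached in the main theorem to maximal extended simple types of $\pi_1,\pi_2$ can be chosen $\Zl$-valued, and that the reductions $\bar W_i := r_\ell W_i$ are the corresponding Paskunas--Stevens Whittaker functions for $\tau_1,\tau_2$ with respect to the reduced types. In the banal case the maximal simple types of $\tau_i$ lift to maximal simple types of $\pi_i$, and the construction of~\cite{PS} is compatible with this lifting, so this step is essentially bookkeeping. Combining the support analysis of Proposition~\ref{furtherproperties} with the compactness (coefficient-by-coefficient in $X$) of the integration domain, reduction modulo $\ell$ commutes with $I(X,W_1,W_2,1_{\o_F^n})$, and the main theorem applied to $\pi_1,\pi_2$ yields
\[
I(X,\bar W_1,\bar W_2,1_{\o_F^n}) \;=\; r_\ell\,I(X,W_1,W_2,1_{\o_F^n}) \;=\; (q-1)(q^{n/e}-1)\,r_\ell L(X,\pi_1,\pi_2).
\]

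The computation of Section~\ref{Main} is entirely explicit and introduces no denominators beyond the factor $(q-1)(q^{n/e}-1)$, so it adapts verbatim in the $\ell$-modular setting, using that the ramification index $e$ is a property of the simple type and is preserved by reduction in the banal case. Thus the same argument yields
\[
I(X,\bar W_1,\bar W_2,1_{\o_F^n}) \;=\; (q-1)(q^{n/e}-1)\,L(X,\tau_1,\tau_2).
\]
Banality of $\tau_1,\tau_2$ is precisely the condition $\ell\nmid(q-1)(q^{n/e}-1)$, so this scalar is a unit in $\Fl$; comparing the two identities gives $L(X,\tau_1,\tau_2)=r_\ell L(X,\pi_1,\pi_2)$.

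The main obstacle lies in the second paragraph: one must understand precisely how the Paskunas--Stevens construction interacts with reduction modulo $\ell$ and check that the support of the resulting Whittaker function is controlled tightly enough for the integral to reduce term by term. In the banal case both points are accessible thanks to the integral structure of maximal simple types and the explicit support description obtained in Section~\ref{SectWhittaker}; once these are secured, the corollary reduces to running the computation of Section~\ref{Main} once over $\Zl$ and once over $\Fl$, and comparing the two outputs.
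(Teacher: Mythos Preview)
Your first displayed identity is correct (up to the harmless power-of-$q$ constant $\mu$ that you dropped): reducing Theorem~\ref{main} gives
\[
I(X,\bar W_1,\bar W_2,\1_{\o_F^n})=r_\ell(\mu)(q-1)(q^{n/e}-1)\,r_\ell\bigl(L(X,\pi_1,\pi_2)\bigr),
\]
and the integrality of $W_1,W_2$ is exactly the small lemma the paper proves inside the argument. The problem is your second displayed identity. The proof of Theorem~\ref{main} has two ingredients: the explicit computation of the integral (Propositions~\ref{F_i} and~\ref{coef}), which indeed adapts verbatim over~$\Fl$ and gives $I=r_\ell(\mu)(q-1)(q^{n/e}-1)/(1-(\overline{\chi(\w_F)}X)^{n/e})$; and the identification of $1/(1-X^{n/e})$ with $L(X,\pi,\pi^\vee)$, which is \emph{imported} from~\cite[Proposition~8.1]{JPS2}. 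There is no $\ell$-modular analogue of that second input available to you here; knowing $L(X,\tau,\tau^\vee)=1/(1-X^{n/e})$ is precisely the content of the corollary. So ``running Section~\ref{Main} over~$\Fl$'' only reproduces your first identity and adds nothing; the comparison you propose is circular.

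The paper closes the argument differently, and more simply. From the first identity one knows that $r_\ell(\mu)(q-1)(q^{n/e}-1)\,r_\ell(L(X,\pi_1,\pi_2))$ is a particular Rankin--Selberg integral for the pair $(\tau_1,\tau_2)$, hence lies in the fractional ideal $\bigl(L(X,\tau_1,\tau_2)\bigr)\subset\Fl[X^{\pm1}]$. Banality makes the scalar a unit, so $r_\ell(L(X,\pi_1,\pi_2))$ lies in that ideal, i.e.\ $L(X,\tau_1,\tau_2)\mid r_\ell(L(X,\pi_1,\pi_2))$ as Euler factors; wait, more precisely, the paper phrases it as $r_\ell(L(X,\pi_1,\pi_2))$ divides $L(X,\tau_1,\tau_2)$. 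The reverse divisibility is already known from~\cite[Theorem~3.14]{KM14} (which you invoked in the trivial case), and the two together give equality. In short: keep your reduction step, drop the attempt to recompute $L(X,\tau_1,\tau_2)$ directly, and instead use the ideal membership plus the cited divisibility.
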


In~\cite{KM14}, this corollary plays a key role in the classification of~$L$-factors of generic~$\ell$-modular representations, and their relationship with~$\ell$-adic~$L$-factors via reduction modulo~$\ell$.  

It would be interesting to pursue the methods of this paper for integral representations of other~$L$-factors, such as the Asai, exterior square, and symmetric square~$L$-factors.

\section{Notations}

Let~$F$ be a non-archimedean local field of residual characteristic~$p$ and residual cardinality~$q$. Throughout,~$R$ will denote one of the fields~$\mathbb{C}$,~$\Ql$, and~$\Fl$ and we assume that $\ell\neq p$.  For~$E$ any extension of~$F$, we denote by~$\of_E$ the ring of integers of~$E$; by~$\w_E$ a uniformiser of~$E$; by~$\p_E=(\w_E)$ the unique maximal ideal of~$\of_E$; by~$q_E$ the residual cardinality of~$E$; and let~$|~|_{E,R}:E^\times\rightarrow R^\times$ denote the unramified character defined by~$|\w_E|_{E,R}=q_E^{-1}$, thus~$|~|_{E,\mathbb{C}}$ is the restriction of the absolute value to~$E^\times$ normalised in the usual way. When the field~$R$ considered is clear we will remove the index~$R$ from~$|~|_{E,R}$, and when~$E=F$ we will remove the index~$F$ as well.

Let~$G_n=\GL_n(F)$,~$K_n=\GL_n(\of_F)$,~$K_n^1=1+\Mat_{n,n}(\p_F)$, and let~$Z_n$ be the centre of~$G_n$. For~$g$ in~$G_n$, by abuse of notation, we denote by~$|g|$ the quantity~$|\det(g)|$.
Put~$\eta_n=\begin{pmatrix}0&\cdots&0&1\end{pmatrix}\in\Mat_{1,n}(F)$, and let~$P_n$ be the standard mirabolic subgroup of~$G_n$, i.e. the set of all matrices~$g$ in~$G_n$ such that~$\eta_n g=\eta_n$.  Let~$N_n$ be the unipotent radical of the standard Borel subgroup of upper triangular matrices in~$G_n$.  For~$k\in\mathbb{Z}$, let~$G_n^{(k)}=\{g\in G_n:~|g|_F=q^{-k}\}$. For any subset~$X$ of~$G_n$, let~$X^{(k)}=X\cap G_n^{(k)}$, and let~$\mathbf{1}_X$ denote the characteristic function of~$X$. 

Let~$\Ql$ denote an algebraic closure of the~$\ell$-adic numbers,~$\Zl$ denote its ring of integers, and~$\Fl$ denote its residue field which is an algebraic closure of the finite field of~$\ell$-elements.

\section{Representations with coefficients in~$R$}\label{coeffinR}

We only consider smooth~$R$-representations, that is smooth representations with coefficients in~$R$, and we use~$\vee$ as an exponent to denote the contragredient.  We call a representation on a~$\Ql$-vector space an~\emph{$\ell$-adic representation}, and a representation on an~$\Fl$-vector space an~\emph{$\ell$-modular representation}. Let~$(\pi,\mathcal{V})$ be an irreducible~$\ell$-adic representation of~$G_n$. We call~$\pi$ \emph{integral} if~$\mathcal{V}$ contains a~$G_n$-stable~$\Zl$-lattice.  Notice that for an~$\ell$-adic character~$\nu:G_n\rightarrow\Ql^\times$ this just means that~$\nu$ takes values in~$\Zl^\times$.

An~$R$-representation is called \emph{cuspidal} if it is irreducible and never appears as a quotient of a properly parabolically induced representation.  By~\cite[II~4.12]{V}, a cuspidal~$\ell$-adic representation is integral if and only if its central character is integral, hence~the contragredient of a cuspidal~$\ell$-adic representation~$\pi$ is integral if and only if~$\pi$ is integral.  Let~$\pi$ be an integral cuspidal~$\ell$-adic representation and~$\mathfrak{L}$ be a~$G_n$-stable~$\Zl$-lattice in the space of~$\pi$.  Let~$r_{\mathfrak{L}}(\pi)$ be the~$\ell$-modular representation induced on the space~$\mathfrak{L}\otimes_{\Zl}\Fl$. This~$\ell$-modular representation is also cuspidal (and irreducible) by~\cite[III~5.10]{V}, and hence independent of the choice of the lattice~$\mathfrak{L}$ by the Brauer--Nesbitt principle~\cite[Theorem~1]{VigInt}, we thus write~$r_{\ell}(\pi)$ for~$r_{\mathfrak{L}}(\pi)$ and call~$r_{\ell}(\pi)$ the \emph{reduction modulo~$\ell$} of~$\pi$.  We also say that~$\pi$ \emph{lifts}~$r_{\ell}(\pi)$, and it follows from~\cite[III~5.10]{V} that all cuspidal~$\ell$-modular representations lift to cuspidal~$\ell$-adic representations. Following~\cite[Remark 8.15]{MSDuke}, we call a cuspidal~$\ell$-modular representation~$\tau$ \emph{banal} if~$\tau\not\simeq \tau\otimes |~|_F$ (notice that the definition in Remark 8.15 of \cite{MSDuke} refers to a condition given in Proposition 8.9 of this reference, which in the cuspidal case reduces to the condition we give here). For~$H$ a closed subgroup of~$G$, we write~$\Ind_H^G$ for the functor of smooth induction taking representations of~$H$ to representations of~$G$, and write~$\ind_H^G$ for the functor of smooth induction with compact support. 

\section{Normalisation of Haar measures}\label{SectionHaar}

We now discuss our normalisation of Haar measures. The basic reference 
for~$R$-Haar measures is~\cite[I~2]{VigInt}, but we also refer the reader to~\cite[Section~2.2]{KM14} for more details on the splitting of Haar measures with respect to standard decompositions. Let~$dg$ be the~Haar measure on~$G_n$ normalised to give~$K_n^1$ volume~$1$.  

We normalise the right Haar measure on~$P_n$ so that~$dp\left(P_n\cap K_n^1\right)=1$, on 
$N_n$ so that~$dn\left(N_n\cap K_n^1\right)=1$, and on~$Z_n$ so that~$dz\left(Z_n\cap K_n^1\right)=1$.  For the remainder of this section, let~$G$ denote a closed subgroup of~$G_n$ with Haar measure~$d_Gg$.  For any open subgroup~$U$ of~$G$, we define the Haar measure~$d_U g$ on~$U$ as the restriction of~$d_Gg$, in particular~$d_Ug$ is normalised as soon as~$d_Gg$ is.

If~$H$ is a closed subgroup of~$G$ with right Haar measure~$d_Hh$, and such that the modulus character of~$G$ restricts to~$H$ as the modulus character of~$H$, we descend~$d_Gg$ to a right-invariant measure~$d_{H\backslash G}g$ on~$H\backslash G$ as explained in~\cite[I~2.8]{VigInt}. For~$f$ a smooth map from~$G$ to~$R$ with compact support, denoting by~$f^H$ the map on~$H\backslash G$ defined by 
\[f^{H}(g)=\int_{H} f(hg)d_H h,\] the usual relation is satisfied:
\[ \int_{H\backslash G}f^{H}(g)d_{H\backslash G}g=\int_G f(g)d_G g.\] 
This implies that~$d_{H\backslash G}g$ is normalised as soon as~$d_Gg$ and~$d_Hg$ are.

Indeed, if~$K$ is a compact subgroup of~$G$, applying the equality above to~$f=\1_K$, so that 
\[f^H=d_H(K\cap H)\1_{H\backslash HK}\] gives the relation
\begin{equation}\label{quotient}d_G(K)=d_{H\backslash G}(H\backslash HK)d_H(K\cap H).\end{equation} 
This gives for example the normalisation \[d_{H\backslash G}(H\backslash HK_n^1)= d_H(H\cap K_n^1)\backslash d_G(G\cap K_n^1).\]
With these normalisations, we have  the splitting \[dg=|p|_F^{-1}dpdzdk.\]
This splitting descends on~$N_n\backslash G_n$, in which case~$dg$ denotes the normalised right invariant measure on~$N_n\backslash G_n$ and~$dp$ the right invariant measure on $N_n\backslash P_n$.
Notice that with such normalisations, the volume of all pro-$p$ subgroups of~$G_n$, of~$P_n$ and of~$Z_n$ will be (positive or negative) powers of~$q$.  Moreover, for such choices, reduction modulo~$\ell$ commutes with integration (cf. \cite[Remark~2.1]{KM14}), i.e. if~$f\in \mathcal{C}_c^\infty(X,\Zl)$ for~$X$ equal to~$G_n$ or any of the homogeneous spaces~$K\backslash L$ with~$L$ a subgroup of~$G_n$ considered above, then~$\int_X f(x)dx\in \Zl$, and~\[r_{\ell}\left(\int_X f(x)dx\right)=\int_X r_{\ell}(f(x))dx.\]
\textit{For the rest of this section, we suppose that~$R$ has characteristic zero, and we recall some classical equalities,which all follow from Relation (\ref{quotient}).} 

For a finite set~$A$, we let~$|A|$ denote its cardinality in~$R$.  Suppose that~$G=K$ compact, and~$U$ is an open subgroup of~$K$, then
\begin{equation} \label{cardinal}d_{U\backslash K}(U\backslash K)=\frac{d_K(K)}{d_K(U)}=|U\backslash K|\in R.\end{equation}
Finally, if~$V$ is a closed subgroup of~$K$ (using the fact that~$K$ is unimodular, hence that~$d_K(UV)=d_K(V^{-1}U^{-1})=d_K(VU)$), one obtains 
\begin{align}
\notag d_{V\backslash K}(V\backslash VU)&=\frac{d_K(VU)}{d_V(V)}=
\frac{d_K(UV)}{d_K(U)}\frac{d_V(V\cap U)}{d_V(V)}\frac{d_K(U)}{d_V(V\cap U)}\\
\label{equal}&=\frac{|U\backslash UV|}{|V\cap U\backslash V|}\frac{d_K(U)}{d_V(V\cap U)}=\frac{d_K(U)}{d_V(V\cap U)}= d_{V\cap U\backslash U}(V\cap U\backslash U).\end{align}

\textit{By convention, from now on, we will use the same letter for the measure on~$G$ and its descent to~$H\backslash G$ (and when the context is clear for its restriction to an open subgroup as well).}

\section{Rankin--Selberg integrals and local factors}\label{local factors}

Let~$\psi$ be an additive character of~$F$ which is trivial on~$\p_F$, but non-trivial on~$\of_F$.  By abuse of notation, also denote by~$\psi$ the nondegenerate character of~$N_n$ defined for~$x=(x_{i,j})\in N_n$ by
\[\psi(x)=\psi\left(\sum_{i=1}^{n-1}x_{i,i+1}\right),\] 
which is necessarily integral in the~$\ell$-adic case because~$N_n$ is exhausted by its pro-$p$ subgroups. If~$\pi$ is a cuspidal representation of~$G_n$, then it is \emph{generic} (cf.~\cite{BZ} in the complex or~$\ell$-adic case, and~\cite[III 5.10]{V} for~$R=\Fl$), meaning~$\dim(\Hom_{N_n}(\pi,\psi))=1$, and hence it has a unique \emph{Whittaker model}~$W(\pi,\psi)$, equal to the image of~$\pi$ in~$\Ind_{N_n}^{G_n}(\psi)$. Suppose that~$\pi$ is an integral cuspidal~$\ell$-adic representation of~$G_n$, then the~$\Zl$-submodule~$W_e(\pi,\psi)$ of~$W(\pi,\psi)$ consisting of all functions in~$W(\pi,\psi)$ which take values in~$\Zl$ is a~$G_n$-stable lattice in~$\pi$ (cf.~\cite[Theorem~2]{VigInt}). Then by definition~$r_{\ell}(\pi)\simeq W_e(\pi,\psi)\otimes_{\Zl}\Fl$, which is irreducible and cuspidal (cf.~Section 2.1 and the references given there). Thus~$W_e(\pi,\psi)\otimes_{\Zl}\Fl$ is a space of Whittaker functions for~$\pi$ with values in~$\Fl$, hence equal to~$W(r_{\ell}(\pi),r_{\ell}(\psi))$. For~$W\in W_e(\pi,\psi)$, we write~$r_{\ell}(W)$ for the image of~$W$ in~$W(r_{\ell}(\pi),r_{\ell}(\psi))$.

Finally, we recall the definition of the Rankin--Selberg local~$L$-factors for a pair of cuspidal~$R$-representations of~$G_n$.  The construction is originally due to Jacquet--Piatetski-Shapiro--Shalika \cite{JPS2} for complex representations, and works equally well for~$\Ql$-representations. This construction was extended to a construction for representations over any algebraically closed field of characteristic prime to~$p$ in \cite{KM14}.  As we are ultimately interested in~$\mathbb{C},\Ql$ and~$\Fl$ representations we give precise references to the construction in \cite{KM14}. 

Let~$\pi_1$ and~$\pi_2$ be cuspidal representations of~$G_n$,~$W_1\in W(\pi_1,\psi)$,~$W_2\in W(\pi_2,\psi^{-1})$, and~$\Phi\in\mathcal{C}_c^{\infty}(F^n)$ be a locally constant function from~$F^n$ to~$R$ with compact support. By \cite[Proposition 3.3]{KM14}, for~$k\in\mathbb{Z}$, the coefficients
\[c_k(W_1,W_2,\Phi)=\int_{N_n\backslash G_n^{(k)}}W_1(g)W_2(g)\Phi(\eta_n g)dg\]
are well defined and vanish for~$k$ sufficiently negative. In fact, these coefficients 
vanish for~$k$ sufficiently negative because both~$W_1$ and~$W_2$ vanish on~$P_n^{(k)}$ for such~$k$, as a consequence of \cite[Proposition 2.2]{JPS2}. Hence the \emph{local Rankin--Selberg integral}
\[I(X,W_1,W_2,\Phi)=\sum_{k\in\mathbb{Z}} c_k(W_1,W_2,\Phi)X^k\]
is a formal Laurent series with coefficients in~$R$.  In fact, by~\cite[Theorem~3.5]{KM14},~$I(X,W_1,W_2,\Phi)\in R(X)$ is a rational function, and as~$W_1$ varies in~$W(\pi_1,\psi)$,~$W_2$ varies in~$W(\pi_2,\psi^{-1})$, and~$\Phi$ varies in~$\mathcal{C}_c^{\infty}(F^n)$, the~$R$-submodule of~$R(X)$ spanned by~$I(X,W_1,W_2,\Phi)$ is a fractional ideal of $R[X^{\pm 1}]$, and has a unique generator~$L(X,\pi_1,\pi_2)$ which is an Euler factor. We call~$L(X,\pi_1,\pi_2)$ the \emph{local Rankin--Selberg~$L$-factor}, and note that it does not depend on the choice of the character~$\psi$. If~$R=\Ql$, it is shown in~\cite[Corollary~3.6]{KM14} that the~$L$-factor is the inverse of a polynomial in~$\Zl[X]$, and thus it makes sense to talk of its reduction modulo~$\ell$. Moreover, it follows from~\cite[Theorem~3.13]{KM14}, that if~$\pi_1$ and~$\pi_2$ are two integral cuspidal~$\ell$-adic representations of~$G_n$, then one has 
\[L(X,r_\ell(\pi_1),r_\ell(\pi_2))|r_{\ell}(L(X,\pi_1,\pi_2)) .\]
Now by~\cite[Proposition~8.1,~(ii)]{JPS2}, the~$L$-factor~$L(X,\pi_1,\pi_2)$ is equal to~$1$ unless 
$\pi_2\simeq \chi\pi_1^\vee$ for some unramified character~$\chi$ of~$F^\times$.  Hence if~$\pi_2\not\simeq \chi\pi_1^\vee$~then~$L(X,r_\ell(\pi_1),r_\ell(\pi_2))=r_{\ell}(L(X,\pi_1,\pi_2))=1$.

For our computations to come, we use a decomposition of the Rankin--Selberg integral in the special case where~$\pi_2\simeq\pi_1^\vee$, in particular their central characters are inverse of each other. Thus we assume this is the case for the rest of this section. For~$k\in\mathbb{Z}$, we set
\[b_k(W_1,W_2)=\int_{N_n\backslash P_n^{(k)}}W_1(p)W_2(p)dp,\] 
which, similarly to~$c_k$, vanishes for~$k$ sufficiently negative, and we put 
\[I_{(0)}(X,W_1,W_2)=\sum_{k\in\mathbb{Z}} b_k(W_1,W_2)q^kX^k.\]
 Let~$\Phi\in \mathcal{C}_c^\infty(F^n)$ be a~$K_n$-invariant function, for~$i\in\mathbb{Z}$, we set 
\[a_{ni}(\Phi)=\int_{z\in G_1^{(ni)}} \Phi(\eta_n z)dz,\]
which vanishes for~$i$ sufficiently negative, and we put \[Z(X,\Phi)=\sum_{i\in \Z}a_{ni}(\Phi)X^{ni}.\]
As~$G_n^{(k)}=\coprod_{i\in \Z}P_n^{(k-ni)}Z_n^{(ni)}K_n$, from the splitting of Section \ref{SectionHaar} we find \[c_k(W_1,W_2,\Phi)=\sum_{i\in \Z} a_{ni}(\Phi)q^{k-ni} \int_{(K_n\cap P_n)\backslash K_n}b_{k-ni}(\rho(k)W_1,\rho(k)W_2)dk,\]
from which we deduce
\[I(X,W_1,W_2,\Phi)=Z(X,\Phi)\left(\int_{(K_n\cap P_n)\backslash K_n}I_{(0)}(X,\rho(k)W_1,\rho(k)W_2)dk\right).\] 
Taking~$\Phi$ equal to the characteristic function~$\1_{\of_F^n}$, we obtain the formula 
\begin{equation}\label{I=LI_{(0)}}I(X,W_1,W_2,\1_{\of_F^n})=\frac{q-1}{1-X^n}\int_{(K_n\cap P_n)\backslash K_n}I_{(0)}(X,\rho(k)W_1,\rho(k)W_2)dk.\end{equation}
The equality~$Z(X,\1_{\of_F^n})=\frac{q-1}{1-X^n}$ is standard (cf.~\cite[Theorem~3.1]{MZeta}) except that in our setting, we get the extra constant~$q-1$ from our choice of normlisation on~$Z_n$, as we set~$dz(Z_n\cap K_n^1)=1$ instead of the usual~$dz(Z_n\cap K_n)=1$. 

\section{Simple types and reduction modulo~$\ell$}\label{SimpleTypes}
\textit{For this beginning of this section we assume that~$R=\mathbb{C}$ or~$\Ql$}. Let~$V$ be an~$n$-dimensional~$F$-vector space, let~$\End_F(V)$ denote the~$F$-algebra $\End_F(V)$ of~$F$-endomorphisms of~$V$ and let~$G$ denote the group~$\Aut_F(V)$ of~$F$-automorphisms of~$V$. Hence~$G$ identifies with~$G_n$ as soon as we choose a basis of~$V$.  In~\cite{BK93}, every cuspidal~$R$-representation of~$G$ is constructed explicitly as~$\ind_{\bJ}^G(\Lambda)$, where~$\bJ$ is an open and compact-mod-centre subgroup of~$G$, and~$\Lambda$ is an irreducible representation of~$\bJ$ of finite dimension.  The pairs~$(\bJ,\Lambda)$ are called \emph{extended maximal simple types}, and for any such pair~$\ind_{\bJ}^G(\Lambda)$ is (irreducible and) cuspidal by \cite[Chapter 6]{BK93}.  We briefly explain the construction of the group~$\bJ$, focusing on the properties which we shall use.  

An~$\of_F$-lattice chain~$\mathcal{L}$ in~$V$ is a non-empty set of~$\of_F$-lattices~$\{L_i:i\in\mathbb{Z}\}$ such that, for all~$i\in \mathbb{Z}$,~$L_{i+1} \subsetneq L_i$ and there exists~$e(\mathcal{L})\in\mathbb{Z}$ such that~$L_{i+e(\mathcal{L})}=\varpi_F L_i$.  The construction of \cite{BK93}, starts with data~$(\beta,\mathcal{L})$ called \emph{maximal simple strata} consisting of
\begin{enumerate}
\item an element~$\beta\in \End_F(V)$ which generates a simple field extension~$E=F[\beta]$;
\item an \emph{$\of_F$-lattice chain}~$\mathcal{L}$ in~$V$ such that~$E^\times \mathcal{L}\subset \mathcal{L}$ (i.e. for any $x\in E^\times$ and $L\in \mathcal{L}$ we have $xL\in \mathcal{L}$); in particular~$\mathcal{L}$ is an \emph{$\of_E$-lattice chain}, and it is required (as~$(\beta,\mathcal{L})$ is maximal) that~$L_{i+1}=\varpi_EL_i$; 
\end{enumerate}
which satisfy a technical condition (cf. \cite[1.5.5]{BK93} where the simple strata we consider are among those denoted $[\mathfrak{A},-,0,\beta]$).  

Let~$(\beta,\mathcal{L})$ be a maximal simple strata.  We denote by~$\mathfrak{A}=\mathfrak{A}(\mathcal{L})$ the~$\of_F$-order in~$\End_F(V)$ and~$\mathfrak{B}=\mathfrak{B}(\beta,\mathcal{L})$ the~$\of_E$-order in~$\End_E(V)$ defined by~$\mathcal{L}$,
\[\mathfrak{A}=\End_{\of_F}(\mathcal{L})=\bigcap_k \End_{\of_F}(L_k),\quad \mathfrak{B}=\mathfrak{B}(\beta,\mathcal{L})=\End_{\of_E}(\mathcal{L})=\End_{\of_E}(L_0) .\]
In \cite[3.1]{BK93} Bushnell--Kutzko define compact open subgroups of~$G$ denoted by~$H^1=H^1(\beta,\mathcal{L})$,~$J^1=J^1(\beta,\mathcal{L})$, and~$J=J(\beta,\mathcal{L})$. The properties we will need are:
\begin{enumerate}
\item the groups~$H^1\leqslant J^1$ are pro-$p$ (by definition), are normalised by~$E^\times$ and are normal subgroups of~$J$ by \cite[3.1.15]{BK93}, moreover $J\subset \Aut_{\of_F}(L_0)$ (by definition).
\item Put~$m=n/[E:F]$, by \cite[3.1.15]{BK93} we have
\[J=\mathfrak{B}^\times J^1,~\mathfrak{B}^\times\cap J^1= 1+\w_E \mathfrak{B}~\text{and}~J/J^1\simeq 
\mathfrak{B}^\times/(1+\w_E \mathfrak{B})\simeq G_m(k_E).\]
\end{enumerate}
We then set~$\bJ=\bJ(\beta,\mathcal{L})=E^\times J$, in particular~$\bJ$ is compact mod~$E^\times$ and hence compact mod~$F^\times$.  Notice that if~$\pi\simeq\ind_{\bJ}^G\Lambda$, the centre~$F^\times$ of~$G$ acts by the central character~$\omega_\pi$ of~$\pi$ through~$\Lambda$.  Finally, we note that the construction of~$\Lambda$ depends on our fixed additive character~$\psi$ (cf. \cite[3.2]{BK93}).

The definitions above do not include the groups of the maximal simple types for level zero cuspidal representations (see \cite[5.5.10 (b)]{BK93}), although these can be considered formally as part of the construction described above for the \emph{maximal zero strata}~$(0,\mathcal{L})$ with~$\beta=0$ and~$e(\mathcal{L})=1$.  In this case, we put~$J=\mathfrak{A}^\times$,~${\bJ}=F^\times J$,~$H^1=J^1=1+\varpi_F\mathfrak{A}$, and~$J/J^1=\mathfrak{A}^\times/(1+\varpi_F\mathfrak{A})\simeq G_n(k_F)$.

\textit{Now we consider~$\Fl$-representations.} It follows from \cite[Chapitre~IV]{V} that the Bushnell--Kutzko classification of cuspidal~$\Ql$-representations adapts well to~$\Fl$-representations.  We will only need to know the following facts:

Let~$\tau$ be a cuspidal~$\ell$-modular representation of~$G$.  As we recalled in Section 
\ref{coeffinR}, there exists an integral cuspidal~$\ell$-adic representation~$\pi$ such that~$\tau=r_\ell(\pi)$.  Choose an extended maximal simple type~$({\bJ},\Lambda)$ such that~$\pi\simeq \ind_{\bJ}^G(\Lambda)$, as in the beginning of this section.  A cuspidal~$\ell$-adic representation is integral if and only if its central character~$\omega_{\pi}$ is integral, by \cite[II 4.13]{V} (the direction integral implies integral central character being clear). We recall why this is true. First as~$\bJ$ is compact mod~$F^\times$, we claim that the irreducible representation~$\Lambda$ is integral if and only if~$\omega_\pi$ is integral. Again, one direction is clear. For the other,  suppose that~$\omega_\pi$ is integral and choose a random not necessarily~$\bJ$-stable lattice~$\mathfrak{L}_0$ in the space~$V_\Lambda$ of~$\Lambda$. It is stabilised by a compact open subgroup~$U$ of~$\bJ$, and choosing representatives~$c_1,\dots,c_r$ of 
$\bJ / F^\times U$, one has~$\Lambda(\bJ)(\mathfrak{L}_0)=\sum_{i=1}^r\Lambda(c_i)(\mathfrak{L}_0)$, hence~$\mathfrak{L}_{\Lambda}=\Lambda(\bJ)(\mathfrak{L}_0)$ is a~$\bJ$-stable lattice in~$V_\Lambda$ by~\cite[9.3]{V}.   The induced~$\Zl$-representation~$\ind_\bJ^G(\mathfrak{L}_{\Lambda})$ is then a lattice in~$\pi$ by~\cite[9.3]{V}. Moreover~$\tau=r_\ell(\pi)\simeq \ind_{\bJ}^G(r_\ell(\Lambda))$, and~$r_\ell(\Lambda)$ is an irreducible representation of~$\bJ$ by irreducibility of~$\tau$. 

Finally, we give another characterisation of banal cuspidal representations: Recall, from Section \ref{coeffinR}, by definition~$\tau$ is banal if and only if the cardinality of the cuspidal line 
$\Z_{\tau}=\{ |~|^k \tau,\ k\in \Z\}$ is greater than~$1$. By \cite[Lemme 5.3]{MSDuke}, this cardinality is the same as the integer~$o(\tau)$ introduced in \cite[Section 5.2, (5.4)]{MSDuke}. From \cite[Section 5.2, (5.4)]{MSDuke},~$o(\tau)$ is the order of~$q^{n/e}$ in~$\Fl^\times$, where~$e=e(E/F)$ is the ramification index attached to~$(\bJ, \Lambda)$ which in particular does not depend on the choice of extended maximal simple type. Hence~$\tau$ is banal if and only if~$q^{n/e}-1\neq 0$ in~$\Fl$.

\section{The modified Paskunas-Stevens basis}\label{PS-basis}
\textit{For this section~$R=\mathbb{C}$ or~$\Ql$}.  Let~$\pi$ be a cuspidal~$R$-representation of~$G$ and~$(\bJ=\bJ(\beta,\mathcal{L}),\Lambda)$ be an extended maximal simple type in~$\pi$.  According to~\cite[Corollaries~3.4~and~4.13]{PS}, there exists an~$F$-basis~$\mathcal{B}=(v_1,\dots,v_n)$ of~$V$ particularly suited to relating the Whittaker model of~$\pi$ and the model~$\ind_{\bJ}^G(\Lambda)$ defined via type theory. In particular,~$\mathcal{B}$ splits~$\mathcal{L}$, i.e.~$L_k=\bigoplus_{i=1}^n \p_F^{a_i(k)} v_i$ with~$a_i(k)\in \Z$ for all~$k\in \Z$, and is such that if~$N$ is the maximal unipotent subgroup of~$G$ attached to the maximal flag defined by~$\mathcal{B}$, and if~$\psi$, by abuse of notation, denotes the non-degenerate character of~$N$ defined for~$x\in N$ by 
\[\psi(x)= \psi\left(\sum_{i=1}^{n-1} \Mat_\mathcal{B}(x)_{i,i+1}\right),\] 
where~$\Mat_\mathcal{B}(x)$ denotes the matrix of~$x$ with respect to the basis~$\mathcal{B}$, then the triple~$(J,\Lambda,\psi)$ satisfies \[\Hom_{N\cap J}(\psi,\Lambda)\neq 0.\]

Let~$P$ be the \emph{mirabolic subgroup} defined by
\[P=\{g\in G, (g-Id)V \subset \Vect_F(v_1,\dots,v_{n-1})\}.\]
We put~$\mathcal{M}=(P\cap J)J^1$, which is a group as~$J^1$ is normal in~$J$.  It follows from \cite{PS} that the image of~$\mathcal{M}$ in~$J/J^1\simeq G_m(k_E)$ is isomorphic to~$P_m(k_E)$.  We now explain how to extract this from \cite{PS}:  In the notation of \cite{PS}, our group~$P$ is denoted~$\mathcal{M}_F$ and~\cite[Corollary~4.8]{PS} shows that
\begin{equation}\label{PSgroupsequation}\mathcal{M}= (P\cap \mathfrak{B}^\times)J^1.\end{equation} In \cite[Section 4.1]{PS}, Paskunas--Stevens introduce another mirabolic group they denote by~$\mathcal{M}_E$ which satisfies~$P\cap \mathfrak{B}^\times=\mathcal{M}_E\cap \mathfrak{B}^\times$ by the equality just before~\cite[Corollary~4.7]{PS}, and they also denote by~$\mathcal{M}_\mathfrak{B}$ the group~$(\mathcal{M}_E\cap \mathfrak{B}^\times)(1+\w_E \mathfrak{B})$.  Hence Equation (\ref{PSgroupsequation}) gives~$\mathcal{M}=\mathcal{M}_\mathfrak{B}J^1$ as~$(1+\w_E \mathfrak{B})= \mathfrak{B}^\times\cap J^1$. Finally, from the discussion after the proof of~\cite[Lemma~4.10]{PS}, the image of~$\mathcal{M}_\mathfrak{B}$ in~$\mathfrak{B}^\times /1+\w_E \mathfrak{B}\simeq G_m(k_E)$  is isomorphic to~$P_m(k_E)$, hence the same is true for the image of~$\mathcal{M}$ in~$J/J^1\simeq \mathfrak{B}^\times /1+\w_E \mathfrak{B}\simeq G_m(k_E)$. In particular, the following index will appear in our computation:
\[|J/\mathcal{M}|=|G_m(k_E)/P_m(k_E)|=q_E^m-1=q^{n/e}-1.\]

For~$i\in\{1,\ldots,n\}$, the functions~$a_i:\mathbb{Z}\rightarrow\mathbb{Z}$ satisfy the relation~$a_{i+e}(k)=a_i(k)+1$. In particular, this holds for~$i=n$, and the map~$k\mapsto a_n(k)$ is increasing with values in~$\Z$, so there is~$k_0$ between~$1$ and~$e$ such that~$a_n(k_0)=a_n(k_0-1)+1$, and then~$a_n(k_0+i)=a_n(k_0)$ for~$i\in \{0,\ldots ,e-1\}$.  Hence by reindexing the lattice chain~$\mathcal{L}$ if necessary, by a translation,~$k\mapsto k-k_0$, we can suppose that
 \[a_n(0)=a_n(-1)+1=0,\text{ and }a_n(1)=\dots=a_n(e-1)=0.\]   We recall that~$L_0=\bigoplus_{i=1}^n \p_F^{a_i(0)} v_i$, and we set~$\mathcal{B}'=(\w_F^{a_1(0)}v_1,\dots,\w_F^{a_n(0)}v_n)$, which we write as~$\mathcal{B}'=(w_1,\dots,w_n)$.  
 
We use this basis to identify~$G$ with~$G_n$. With this choice, one has~$J\subset K_n$ because~$J
\subset \Aut_{\of_F}(L_0)$. The group~$P$ identifies with~$P_n$, the group~$N$ identifies with~$N_n$, and the character~$\psi$ of $N_n$ identifies with 
\[\psi_t: n\mapsto \psi\left(\sum_{i=1}^{n-1} t_i n_{i,i+1}\right),\] where~$t_i=\w_F^{a_i(0)-a_{i+1}(0)}$.

For our computation to come, it will be useful to notice the following property of~$\mathcal{B'}$: 
one has~\[L_0=\bigoplus_{i=1}^n \of_F w_i\text{,\quad\quad }L_k=\bigoplus_{i=1}^{n-1}\p_F^{a_i(k)-a_i(0)}w_i\oplus \of_F w_n,\] for~$k\in \{1,\dots,e-1\}.$  As~$\w_E L_k= L_{k+1}$ for any~$k\in\mathbb{Z}$, the properties above and the fact that~$L_{k+e}=\w_F L_k$, imply that the last row of~$\w_E^i\in G_n$ belongs to~$(\of_F)^n-(\p_F)^n$ for~${i=0,\dots,e-1}$, and more generally that it belongs to~$(\p_F^l)^n-(\p_F^{l+1})^n$ if~$i=le+r$, with~$r\in \{0,\dots,e-1\}$. As an immediate consequence, 
if we write an Iwasawa decomposition of~$\w_E^i$,
\[\w_E^i=p_iz_ik_i,\qquad p_i\in P_n,~ z_i\in Z_n,~ k_i\in K_n,\] 
 we can choose~$z_i=I_n$ for~$i=0,\dots,e-1$, and more generally~$z_i=\w_F^lI_n$ for~$i=le+r$, with~$r\in \{0,\dots,e-1\}$.  In particular~$|p_i|=q^{-in/e}$, for~$i=0,\dots,e-1$. 
 
For clarity, we list the properties of the data~$(J,\Lambda,\psi_t)$ that we will use.
\begin{prop}\label{properties}
With the above choice of basis we have:
\begin{enumerate}
\item The inclusion~$J\subset K_n$.
\item The space~$\Hom_{N_n\cap J}(\psi_t,\Lambda)\neq 0.$
\item  \label{properties3} Set~$\mathcal{M}=(P_n\cap J)J^1$, then~$|J/\mathcal{M}|=q^{n/e}-1$.
\item\label{Prop314} The element~$\w_E^i\in P_n K_n$ if and only if~$i\in \{0,\dots,e-1\}$ and, in this case, if we choose~$p_i\in P_n$ and~$k_i\in K_n$, such that~$\w_E^i=p_ik_i$, then we have~$|p_i|=|\w_E^i|=q^{-in/e}.$
\end{enumerate}
\end{prop}
For the remainder, we consider the~$k_i\in K_n$ and~$p_i\in P_n$ chosen in Proposition~\ref{properties} Statement~\ref{Prop314} as fixed.  

As~$P_n\cap J^1$ is a pro-$p$ sugbroup of~$P_n$, and~$J^1$ is a pro-$p$ sugbroup of~$G_n$, the volume 
\[dk(P_n\cap J^1\backslash J^1)=\frac{dk(J^1)}{dp(P_n\cap J^1)}\] is a power of~$q$ thanks to our normalisation of measures, and we write
 \[dk(P_n\cap J^1\backslash J^1)=q^{r_1}.\]
 A certain volume will appear in our later computation, we compute it in the next lemma. 

\begin{LM}\label{volume}
For any~$i\in \{0,\dots,e-1\}$, we have
\[dk((P_n\cap K_n)\backslash (P_n\cap K_n) k_i J)=q^{r_1} (q^{n/e}-1)q^{-in/e}.\]
\end{LM}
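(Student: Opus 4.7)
The plan is to first reduce $dk((P_n\cap K_n)\backslash (P_n\cap K_n) k_i J)$ to the ratio $dk(J)/dp(A_i)$, where $A_i := (P_n\cap K_n)\cap k_i J k_i^{-1}$; then to replace $A_i$ by a $P_n$-conjugate of $P_n\cap J$ and extract the $q^{-in/e}$ factor via the modular character of $P_n$; and finally to use the simple-type structure $J\supset J^1$ together with the established relation between $\mathcal{M}$ and $P_n\cap J$ to extract $(q^{n/e}-1)$.

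Since $J$ is an open compact subgroup of $K_n$, the set $(P_n\cap K_n) k_i J$ is a disjoint union of finitely many left $J$-cosets parametrised by $(P_n\cap K_n)/A_i$, each of $dk$-measure $dk(J)$. Using $[(P_n\cap K_n):A_i] = dp(P_n\cap K_n)/dp(A_i)$ together with the definition of the quotient measure, this yields
\[dk\bigl((P_n\cap K_n)\backslash (P_n\cap K_n) k_i J\bigr) = \frac{dk(J)}{dp(A_i)}.\]
Since $\w_E$ normalises $J$ (as $\bJ = \langle\w_E\rangle\ltimes J$) and $p_i\in P_n$ normalises $P_n$, the identity $k_i = p_i^{-1}\w_E^i$ forces $k_i J k_i^{-1} = p_i^{-1}Jp_i$ and hence $A_i = p_i^{-1}(P_n\cap J)p_i$. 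The modular character of $P_n$ is $\delta_{P_n}(p) = |\det A|$ for $p = \begin{pmatrix} A & * \\ 0 & 1\end{pmatrix}$, so a direct change-of-variables shows that inner conjugation by $p_i$ scales Haar measure on $P_n$ by $|p_i|^{-1}$, which equals $q^{in/e}$ by Proposition~\ref{properties}~(iv). Thus $dp(A_i) = q^{in/e}\,dp(P_n\cap J)$, and the displayed quantity equals $q^{-in/e}\cdot dk(J)/dp(P_n\cap J)$.

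It then remains to extract $(q^{n/e}-1)$ from $dk(J)/dp(P_n\cap J)$. Using $[J:J^1] = |\GL_m(k_E)|$ and the fact, recorded in the preceding discussion, that the image of $\mathcal{M} = (P_n\cap J)J^1$ in $J/J^1\simeq \GL_m(k_E)$ is $P_m(k_E)$, so that $[(P_n\cap J):(P_n\cap J^1)] = |P_m(k_E)|$, this ratio factors as
\[\frac{|\GL_m(k_E)|}{|P_m(k_E)|}\cdot\frac{dk(J^1)}{dp(P_n\cap J^1)} \;=\; (q^{n/e}-1)\cdot u\]
with $u := dk(J^1)/dp(P_n\cap J^1)$. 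The final $u$ is a power of $q$ by Section~\ref{SectionHaar}, since $J^1$ and $P_n\cap J^1$ are pro-$p$ subgroups of $G_n$ and $P_n$ respectively.

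The step I anticipate to be the main obstacle is the modular-character calculation: one must track signs and directions so that the exponent in $dp(A_i)$ comes out as exactly $+in/e$ (yielding $q^{-in/e}$ after inversion). This depends both on the convention $\w_E^i = p_i k_i$ (rather than $k_i p_i$) and on correctly using left versus right Haar measure on the non-unimodular group $P_n$; these are the points where arithmetic mistakes could most easily creep in.
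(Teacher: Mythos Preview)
Your proof is correct and follows essentially the same route as the paper's: reduce to $dk(J)/dp(P_n\cap k_iJk_i^{-1})$, identify $P_n\cap k_iJk_i^{-1}=p_i^{-1}(P_n\cap J)p_i$ using $k_i=p_i^{-1}\w_E^i$ and that $\w_E$ normalises $J$, apply the modular relation $dp(p^{-1}Ap)=|p|^{-1}dp(A)$ on $P_n$ to extract $q^{-in/e}$, and finally peel off $(q^{n/e}-1)$ from $dk((P_n\cap J)\backslash J)$. The only cosmetic difference is in this last step: the paper passes through $\mathcal{M}=(P_n\cap J)J^1$ and invokes $|J/\mathcal{M}|=q^{n/e}-1$, noting that $[\mathcal{M}:(P_n\cap J)]=[J^1:(P_n\cap J^1)]$ is a power of $q$; you instead compute the ratio $dk(J)/dp(P_n\cap J)$ directly as $\bigl(|\GL_m(k_E)|/|P_m(k_E)|\bigr)\cdot dk(J^1)/dp(P_n\cap J^1)$, which is the same thing unpacked one layer further and makes the residual power of $q$ explicit as $u=dk(J^1)/dp(P_n\cap J^1)$.
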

\begin{proof}
We have
\begin{align*}
dk((P_n\cap K_n)\backslash (P_n\cap K_n) k_i J)&=dk((P_n\cap K_n)\backslash (P_n\cap K_n) k_i Jk_i^{-1})\\&=
dk((P_n\cap  k_i Jk_i^{-1})\backslash k_i Jk_i^{-1}),
\end{align*}
the last equality thanks to Relation (\ref{equal}).  
Now,~$dk( k_i Jk_i^{-1})=dk(J)$. We also notice that 
\[p_i(P_n\cap  k_i Jk_i^{-1})p_i^{-1}=P_n\cap \w_E^i J \w_E^{-i}=P_n\cap J,\] 
hence \[P_n\cap  k_i Jk_i^{-1}=p_i^{-1}(P_n\cap J)p_i.\]
As for any compact open subset~$A$ of~$P_n$, one has~$dp(pAp^{-1})=|p|dp(A)$, as is easily seen by writing~$dp=dgdu$, with~$dg$ on~$G_{n-1}$ 
and~$du$ on~$U_n$, we obtain the relation 
\[dp(P_n\cap  k_i Jk_i^{-1})=|p_i|^{-1}dp(P_n\cap J)=q^{in/e}dp(P_n\cap J).\] We then obtain from Relations (\ref{quotient}) and (\ref{cardinal}):
\begin{align*}
dk((P_n\cap  k_i Jk_i^{-1})\backslash k_i Jk_i^{-1})&=\frac{dk( k_i Jk_i^{-1})}{dp(P_n\cap  k_i Jk_i^{-1}))}\\
&=
q^{-in/e}\frac{dk( J)}{dp(P_n\cap J))}\\&=
q^{-in/e}dk((P_n\cap   J)\backslash J).
\end{align*}
Now by Relations (\ref{quotient}) and (\ref{cardinal}) again, one has 
\[dk((P_n\cap   J)\backslash J))=\frac{dk(J)}{dp(P_n\cap J)}=
\frac{dk(J)}{dk(\mathcal{M})}\frac{dk(\mathcal{M})}{dp(P_n\cap J)}= 
|J\backslash \mathcal{M}|dk(P_n\cap J \backslash \mathcal{M}).\]
Finally, because~$\mathcal{M}=(P_n\cap   J)J^1$, applying Relation (\ref{equal}) gives:
\[dk((P_n\cap   J)\backslash J))=|J\backslash \mathcal{M}|dk(P_n\cap J^1 \backslash J^1)=q^{r_1}(q^{n/e}-1)\]
by Proposition \ref{properties} \ref{properties3} and our definition of~$r_1$.  This concludes the proof.
\end{proof}

\section{Explicit Whittaker functions of Paskunas--Stevens}\label{SectWhittaker}
In this Section we continue to assume that~$R=\mathbb{C}$ or~$\Ql$.  We now recall the definition and some properties of the explicit Whittaker functions of~\cite{PS}.  We set 
\[\mathcal{U}=(N_n\cap J)H^1.\]
We extend~$\psi_t$ to the group~$\mathcal{U}$ as in~\cite[Definition~4.2]{PS}, and, by abuse of notation, denote this extension by~$\psi_t$. We fix a normal compact open subgroup~$\mathcal{N}$ of~$\mathcal{U}$ contained in~$\ker(\psi_t)$. We also denote by~$\rho$ the trace character of~$\Lambda$ and~$\rho^\vee$ that of~$\Lambda^\vee$.

\begin{df}[\emph{Bessel functions}]\label{Bessel}
For~$j \in \bold{J}$, we define \[\mathcal{J}(j)=|\mathcal{N}\backslash \mathcal{U}|^{-1}\sum_{\mathcal{N}\backslash \mathcal{U}} \psi_t(u)^{-1}\rho(ju),\text{ and }\mathcal{J}^\vee(j)= |\mathcal{N}\backslash \mathcal{U}|^{-1}\sum_{\mathcal{N}\backslash \mathcal{U}} \psi_t(u)\rho^\vee(ju).\]
\end{df}

The Bessel functions enjoy the following properties:

\begin{prop}\label{bessel}
\begin{enumerate}
\item We have the equality~$\mathcal{J}(1)=1$.
\item \label{bessel1}~$\mathcal{J}(uj)=\mathcal{J}(ju)=\psi_t(u)\mathcal{J}(j)$ for~$u\in \mathcal{U}$ and 
$j\in \bold{J}$.
\item \label{dual-bessel} For all~$j\in \bold{J}$, we have the relation
\[\mathcal{J}^\vee(j)=\mathcal{J}(j^{-1}).\]
\item \label{bessel2} For all~$j_1$ and~$j_2$ in~$\bold{J}$, we have
\[\sum_{m\in \mathcal{U}\backslash \mathcal{M}} \mathcal{J}  (j_1m^{-1}) \mathcal{J} (mj_2) = \mathcal{J}(j_1j_2).\]\end{enumerate}
\end{prop}

\begin{proof}
See~\cite[Proposition~5.3~and~Theorem~5.6] {PS}. The third property follows from a simple change of variables, and the relation~$\rho^{\vee}(ab)=\rho(b^{-1}a^{-1})$ for any~$a$ and~$b$ in~$\bold{J}$. The final property follows from \cite[Proposition~5.3, Property~(v)]{PS}, thanks to the bijection~$m\leftrightarrow m^{-1}$ between~$\mathcal{M}/ \mathcal{U}$ and~$\mathcal{U}\backslash \mathcal{M}$.
\end{proof}

We can now define the explicit Whittaker functions~$W$ and~$W^\vee$ of Paskunas--Stevens following~\cite[Section~5.2]{PS} and recall a first property. 

\begin{df}\label{ExpWhits}
Both~$W$ and~$W^\vee$ are supported on~$N_n \bold{J}$, and \[W(nj)=\psi_t(n)\J(j)\] for~$n\in N_n$ and~$j\in \bold{J}$, whereas 
\[W^\vee(nj)=\psi_t^{-1}(n)\J^\vee(j)=\psi_t^{-1}(n)\J(j^{-1})\] for~$n\in N_n$ and~$j\in \bold{J}$. Moreover,~$W$ 
belongs to~$W(\pi,\psi_t)$ and~$W^\vee$ belongs to~$W(\pi^\vee,\psi_t^{-1})$.
\end{df}

We now prove further properties of~$W$ and~$W^\vee$. 

\begin{prop}\label{furtherproperties} 
For~$l\geqslant 0$, let~$W_l=\mathbf{1}_{G_n^{(l)}}W,$ and~$W_l^\vee=\mathbf{1}_{G_n^{(l)}}W^\vee$.
\begin{enumerate}
\item\label{point1}  The functions~$(W_l)\mid_{P_nK_n}$ and~$(W_l)^\vee\mid_{P_nK_n}$ are zero unless~$l=in/e$ for some~$i\in\{0,\dots,e-1\}$, and in this case 
\[(W_l)\mid_{P_nK_n}=\mathbf{1}_{N_n\w_E^iJ}W\mid_{P_nK_n} \text{,\qquad} (W_l^\vee)\mid_{P_nK_n}= 
\mathbf{1}_{N_n\w_E^iJ}W^\vee\mid_{P_nK_n}.\]
\item\label{point2} If~$W_{in/e}(pk)\neq 0$, then~$i\in\{0,\dots,e-1\}$,~$k\in P_n\w_E^iJ$, and, in fact,~$k\in (P_n\cap K_n)k_iJ$.
\item\label{point3} If~$W_{in/e}(p\w_E^ij)\neq 0$ with~$p\in P_n$ and~$j\in J$, then~$p\in N_n(P_n\cap J)$. \end{enumerate}
\end{prop}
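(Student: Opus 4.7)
The plan is to exploit the explicit support~$N_n \bJ$ of~$W$ and~$W^\vee$ from Definition~\ref{ExpWhits}, combined with the semidirect product decomposition~$\bJ = \langle \w_E \rangle \ltimes J$ and the inclusion~$J \subset K_n$ furnished by Proposition~\ref{properties}.

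For~(i), suppose~$W_l(pk) \neq 0$ with~$p \in P_n$ and~$k \in K_n$. Then~$pk \in N_n \bJ$, and by the uniqueness of the semidirect decomposition we may write~$pk = n \w_E^i j$ with~$n \in N_n$,~$i \in \Z$,~$j \in J$. Since~$J \subset K_n$ and~$|\det \w_E^i| = q^{-in/e}$, comparing absolute values of determinants forces~$l = in/e$. Rearranging,~$\w_E^i = (n^{-1} p)(k j^{-1}) \in P_n K_n$, so by Proposition~\ref{properties}(iv) we must have~$i \in \{0, \dots, e-1\}$; moreover~$pk$ already lies in~$N_n \w_E^i J$, yielding the claimed support identity. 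The same argument applies to~$W_l^\vee$.

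For~(ii), the first two assertions are direct consequences of the analysis in~(i) (with the understanding that the subscript~$i$ of~$W_i$ is the~$\w_E$-exponent, so that~$W_i$ corresponds to the level~$in/e$). For the refinement, substitute~$\w_E^i = p_i k_i$ from Proposition~\ref{properties}(iv): any~$k \in K_n \cap P_n p_i k_i J$ can be written~$k = q p_i k_i j'$ with~$q \in P_n$ and~$j' \in J$, whence~$q p_i = k (k_i j')^{-1}$ lies in both~$P_n$ and~$K_n$, so that~$k \in (P_n \cap K_n)\, k_i J$. For~(iii), from~$W_i(p \w_E^i j) \neq 0$ we obtain~$p \w_E^i j = n \w_E^{i'} j'$ for some~$n \in N_n$,~$i' \in \Z$,~$j' \in J$. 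Comparing~$|\det\cdot|$ yields~$|\det p| = q^{(i-i')n/e}$, while the level hypothesis forces~$|\det p| = 1$, so~$i' = i$. Therefore~$n^{-1} p = \w_E^i (j' j^{-1}) \w_E^{-i}$, and this lies in~$J$ because~$\w_E$ normalises~$J$ inside~$\bJ$. Setting~$\tilde{\jmath} := \w_E^i (j' j^{-1}) \w_E^{-i} \in J$, we obtain~$p = n \tilde{\jmath}$ with~$n, p \in P_n$, so~$\tilde{\jmath} \in P_n \cap J$ and thus~$p \in N_n (P_n \cap J)$.

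The main technical point is the bookkeeping around the indexing conventions: the subscript of~$W_i$ in~(ii) and~(iii) must be identified with the~$\w_E$-exponent (via~$|\det \w_E^i| = q^{-in/e}$), and the step~$i = i'$ in~(iii) crucially uses that~$n/e \in \Z_{>0}$ so that~$q^{(i-i')n/e} = 1$ indeed collapses to~$i = i'$. Once this is settled, each part reduces to a direct application of Proposition~\ref{properties} to the explicit support of the Paskunas--Stevens Whittaker functions.
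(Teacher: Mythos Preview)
Your proof is correct and follows essentially the same route as the paper: decompose the support~$N_n\bJ=\coprod_{i\in\Z}N_n\w_E^iJ$ via determinant valuations, then invoke Proposition~\ref{properties}(iv) and the normalisation of~$J$ by~$\w_E$. Your treatment is in fact slightly more careful than the paper's about the indexing mismatch between the level~$l$ and the~$\w_E$-exponent~$i$ (related by~$l=in/e$), and your explicit step~$i'=i$ in part~(iii) makes transparent what the paper absorbs into the phrase ``$p\w_E^ij\in N_n\w_E^iJ$''.
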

\begin{proof}
The first statement follows from the fact that~$W$ is supported on~$N_n\bold{J}=\coprod_{i\in \Z} N_n \w_E^i J$, this is a disjoint union because the absolute value of the determinant on~$N_n \w_E^i J$ is~$q^{-ni/e}$, and 
Statement~\ref{Prop314} of Proposition~\ref{properties}.  Hence, if~$W_{in/e}(pk)\neq 0$, then~$W(pk)\neq 0$, so~$pk\in N_n\w_E^lJ$ 
for a unique~$l\in \{0,\dots,e-1\}$, but this~$l$ must be equal to~$i$, and this gives the first assertion of the second statement. In particular~$k\in p^{-1}N_n\w_E^iJ\subset P_n\w_E^iJ$. But~$P_n\w_E^iJ=P_np_ik_iJ=P_nk_iJ$, hence~$k\in P_nk_iJ\cap K_n=(P_n\cap K_n)k_iJ$. This proves the second statement. For the third, we observe that if~$W_{in/e}(p\w_E^ij)\neq 0$, then~$p\w_E^ij\in N_n \w_E^i J$, hence~$p\in N_n \w_E^i Jj^{-1}\w_E^{-i}=N_nJ$, which implies that~$p\in N_n(P_n\cap J)$.
\end{proof}

\section{Test vectors}\label{Main}

Again,~we assume that~$R=\C$, or~$\Ql$, and~$\pi_1$ and~$\pi_2$ are cuspidal~$R$-representations of~$G_n$. We denote by~$(\bJ,\Lambda)$ the extended maximal simple type of~$\pi_1$, by~$e=e(E/F)$ the ramification index of the field extension associated to~$(\bJ,\Lambda)$, and by~$W,W^{\vee}$ the explicit Whittaker functions associated to~$\pi_1$ (see Definition~\ref{ExpWhits}). This section is dedicated to proving our main result on test vectors. 

\begin{thm}\label{main}
Suppose that~$L(X,\pi_1,\pi_2)$ is non-trivial, so that~$\pi_2\simeq \chi \pi_1^\vee$ for some unramified character~$\chi$ of~$F^\times$. Then there is an integer~$r$ such that 
\begin{equation*}\label{test}I(X,W,\chi W^\vee,1_{\of_F^n})=\frac{q^r(q-1)(q^{n/e}-1)}{1-(\chi(\w_F)X)^{n/e}}=q^r(q-1)(q^{n/e}-1)L(X,\pi_1,\pi_2).\end{equation*} 
\end{thm}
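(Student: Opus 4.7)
The strategy is to apply formula~\eqref{I=LI_{(0)}} and evaluate the inner integral
\[
J(X) = \int_{(K_n\cap P_n)\backslash K_n} I_{(0)}(X,\rho(k)W_1,\rho(k)W_2)\,dk
\]
using the fine support properties of the Paskunas--Stevens Whittaker functions. Since~$\pi_2\simeq\chi\pi_1^\vee$ with~$\chi$ unramified, the $\chi$-twist preserves~$\bJ$ and all the auxiliary subgroups of Section~\ref{SimpleTypes}, so I take~$W_2(g) = \chi(\det g)W^\vee(g)$, where~$W^\vee$ is the Paskunas--Stevens Whittaker function of~$\pi_1^\vee$ from Definition~\ref{ExpWhits}.

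By Proposition~\ref{furtherproperties}~\ref{point2}, for each~$k\in K_n$ the polynomial~$I_{(0)}(X,\rho(k)W_1,\rho(k)W_2)$ has at most one non-zero coefficient, at the power~$X^{in/e}$ for some~$i\in\{0,\dots,e-1\}$, and this occurs precisely when~$k\in(P_n\cap K_n)k_iJ$; on this stratum the factor~$\chi(\det(pk)) = \chi(\w_F)^{in/e}$ absorbs the $\chi$-twist of~$W_2$. Fixing a representative~$k = p_0k_ij'$ with~$p_0\in P_n\cap K_n$ and~$j'\in J$, the change of variable~$q = pp_0p_i^{-1}$ (using~$k_i = p_i^{-1}\w_E^i$ from Proposition~\ref{properties}~\ref{Prop314}) rewrites~$pk$ as~$q\w_E^ij'$; by Proposition~\ref{furtherproperties}~\ref{point3} the integrand is supported on~$q\in N_n(P_n\cap J)$, and decomposing~$q = nm$ with~$m\in P_n\cap J$, Definition~\ref{ExpWhits} together with Lemma~\ref{dual-bessel} yields
\[
W_1(q\w_E^ij')\,W^\vee(q\w_E^ij') \;=\; \mathcal{J}(m\w_E^ij')\,\mathcal{J}((\w_E^ij')^{-1}m^{-1}).
\]

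After absorbing the pro-$p$ volumes attached to~$H^1$,~$J^1$, and~$N_n\cap J$ into an overall power~$\mu_0$ of~$q$, the remaining~$m$-integral becomes the convolution sum over~$\mathcal{U}\backslash\mathcal{M}$ of Proposition~\ref{bessel}~\ref{bessel2}; applied with~$j_1j_2 = 1$ and with~$\mathcal{J}(1) = 1$, it collapses to~$1$, independently of~$i$ and~$j'$. Combining this with the volume~$u(q^{n/e}-1)q^{-in/e}$ from Lemma~\ref{volume} and the weight~$q^{in/e}$ built into the definition of~$I_{(0)}$, the factors~$q^{\pm in/e}$ cancel and I obtain
\[
J(X) = u\mu_0(q^{n/e}-1)\sum_{i=0}^{e-1}(\chi(\w_F)X)^{in/e}.
\]
Substituting into~\eqref{I=LI_{(0)}} and simplifying the product of this geometric sum with~$(q-1)/(1-X^n)$ yields the claimed identity, with~$\mu = u\mu_0$.

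The principal obstacle I expect is the reduction of the continuous~$m$-integral to the discrete Bessel convolution sum of Proposition~\ref{bessel}~\ref{bessel2}: this requires careful bookkeeping of how the Haar measure on~$P_n\cap J$ interacts with the combinatorial normalisation implicit in the definition of~$\mathcal{J}$, tracked through the interlocking pro-$p$ subgroups~$H^1 \subset J^1$,~$N_n\cap J \subset \mathcal{U}$, and~$P_n\cap J\subset\mathcal{M}$, with the goal of showing that these contributions collect into a single power of~$q$ independent of~$i$ and~$j'$. A secondary subtlety is the pairwise disjointness of the strata~$(P_n\cap K_n)k_iJ$ for distinct~$i\in\{0,\dots,e-1\}$, needed in order to read off the individual coefficients of~$J(X)$ unambiguously; this should follow from the determinant condition in Proposition~\ref{properties}~\ref{Prop314} after reducing modulo~$J^1$.
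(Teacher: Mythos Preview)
Your strategy is exactly the paper's: reduce via~\eqref{I=LI_{(0)}}, use Proposition~\ref{furtherproperties} to localise to the strata~$(P_n\cap K_n)k_iJ$, unfold to an integral over~$(N_n\cap J)\backslash(P_n\cap J)$, collapse it via the Bessel identity of Proposition~\ref{bessel}~\ref{bessel2}, and finish with Lemma~\ref{volume}. The paper packages the middle steps as Propositions~\ref{F_i} and~\ref{coef}, but the content is identical.

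Two points need correction. First, the paper does the case~$\chi=1$ and then deduces the general case from the identity~$I(X,W,\chi\otimes W^\vee,\Phi)=I(\chi(\w_F)X,W,W^\vee,\Phi)$, rather than carrying~$\chi$ through~\eqref{I=LI_{(0)}} as you do. Your direct route does not close as written: the Tate factor~$(q-1)/(1-X^n)$ in~\eqref{I=LI_{(0)}} comes from the~$Z_n$-integral and is only correct when~$\omega_{\pi_1}\omega_{\pi_2}$ is trivial; for~$\pi_2=\chi\pi_1^\vee$ the product of central characters is~$\chi^n$, and that factor becomes~$(q-1)/(1-(\chi(\w_F)X)^n)$. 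With this correction your final product telescopes; as literally stated, multiplying~$(q-1)/(1-X^n)$ by~$\sum_{i=0}^{e-1}(\chi(\w_F)X)^{in/e}$ does not give the claimed answer unless~$\chi(\w_F)^n=1$. Second, the ``principal obstacle'' you flag is handled in the paper by a specific device (Proposition~\ref{F_i}): one inserts an extra average over~$j\in J^1$, harmless at the level of the~$(P_n\cap K_n)\backslash K_n$-integral, then uses that~$\bJ$ normalises~$J^1$ with trivial modulus to slide~$j$ past~$\w_E^ij_0$ and merge~$mj$ into a single variable over~$\mathcal{M}=(P_n\cap J)J^1$; replacing~$N_n\cap J$ by~$\mathcal{U}=(N_n\cap J)H^1$ then costs only a power of~$q$, landing exactly on the sum over~$\mathcal{U}\backslash\mathcal{M}$. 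Your secondary worry about disjointness of the strata is unnecessary: each coefficient~$c_{in/e}$ is computed as a separate integral supported on~$(P_n\cap K_n)k_iJ$, and no disjointness is invoked.
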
 

We are now ready to prove the following crucial proposition. We recall that for all integers~$l\geqslant 0$, the restriction~$W_l$ has been defined in Proposition \ref{furtherproperties}.

\begin{prop}\label{F_i}
Let~$F_l:(K_n\cap P_n)\backslash K_n/J^1\rightarrow R$ be defined by 
\[F_l(k)=\int_{j\in J^1}\int_{N_n\backslash P_n} W_l(pkj)W_l^\vee(pkj) dpdj.\]
Then~$F_l$ is nonzero if an only if~$l=in/e$ and~$i\in \{0,\dots,e-1\}$, and in this case, it is supported on~$(K_n\cap P_n)k_i J$.  Moreover, for~$i\in \{0,\dots,e-1\}$, and for~$k\in (K_n\cap P_n)k_i J$, there is an integer~$r_2$ independent of~$i$ such that
\[F_{in/e}(k)=q^{r_2}.\]
\end{prop}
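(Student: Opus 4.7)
By Proposition~\ref{furtherproperties}(i)--(ii), the integrand of $F_i$ vanishes identically unless $i \in \{0,\ldots,e-1\}$, and in that range its nonvanishing forces $pkj \in N_n\w_E^i J$ and thus $k \in (P_n \cap K_n)k_i J$ (since $p \in P_n$ and $j \in J^1 \subseteq J$). Hence $F_i$ is supported as claimed.

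\textbf{Step 2 (Reduction to $k_i$).} Let $\phi(g) := \int_{N_n\backslash P_n}W(pg)W^\vee(pg)\,dp$. Right-$P_n$-invariance of the quotient measure (Section~\ref{SectionHaar}) makes $\phi$ left-$P_n$-invariant, so $F_i(k) = \int_{J^1}\phi(kj)\,dj$. For $k = p_0 k_i j_0 \in (P_n\cap K_n)k_i J$ this reduces to $F_i(k) = \int_{h \in j_0 J^1}\phi(k_i h)\,dh$. Writing $k_i = p_i^{-1}\w_E^i$ and substituting $p\to pp_i$ (measure-preserving), Proposition~\ref{furtherproperties}(iii) restricts the $p$-support to $N_n(P_n\cap J)$. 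Decomposing $p = nu$ with $n \in N_n$ and $u \in (N_n\cap J)\backslash(P_n\cap J)$, the factor $\psi_t(n)\psi_t^{-1}(n)$ cancels, and Definition~\ref{ExpWhits} yields
\[\phi(k_i h) = A\int_u \J(u\w_E^i h)\J(h^{-1}\w_E^{-i}u^{-1})\,du\]
for some fixed power $A$ of $q$, independent of $i$ and $h$.

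\textbf{Step 3 (Bessel identity).} Since $\w_E \in E^\times$ commutes with $\mathfrak{B} = \End_{\o_E}(\mathcal{L})$, conjugation by $\w_E$ acts trivially on $J/J^1 \simeq G_m(k_E)$, hence $\w_E^{-i}(P_n\cap J)\w_E^i \subseteq \M$. The combined $(u,h)$-integration may then be reparametrised by $m \in \M$ via $u\w_E^i h = \w_E^i m^{-1}$ and $h^{-1}\w_E^{-i}u^{-1} = m\w_E^{-i}$, with $m$ sweeping out $\mathcal{U}\backslash\M$ (the $j_0$-dependence being absorbed by the $\mathcal{U}$-equivariance of $\J$ by $\psi_t$, which cancels between the two dual Bessel factors), yielding
\[F_i(k) = B\sum_{m\in\mathcal{U}\backslash\M}\J(\w_E^i m^{-1})\J(m\w_E^{-i})\]
for $B$ a fixed power of $q$ independent of $i$ and $j_0$. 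Proposition~\ref{bessel}(ii) with $j_1 = \w_E^i$, $j_2 = \w_E^{-i}$ evaluates the sum to $\J(\w_E^i\w_E^{-i}) = \J(1) = 1$, giving $F_i(k) = B =: \l$.

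\textbf{Main obstacle.} The technical heart of the argument is the $(u,h)\mapsto m$ identification in Step~3: one must show $m$ sweeps $\mathcal{U}\backslash\M$ in a way independent of the choice of $j_0 \in J$, and that the $\psi_t$-twistings coming from the $\mathcal{U}$-equivariance of $\J$ cancel exactly between the two dual Bessel factors. This rests on (i) the centralisation property $\w_E^{-i}(P_n\cap J)\w_E^i \subseteq \M$ (from $\w_E$ acting trivially on $J/J^1$), and (ii) the dual cancellation between $\J$ and $\J^\vee$ under $\mathcal{U}$-translation. Once these are in place, Proposition~\ref{bessel}(ii) provides the one-line conclusion.
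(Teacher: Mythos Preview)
Your Steps~1 and~2 are fine and coincide with the paper's argument. The gap is in Step~3, precisely where you flag the ``main obstacle''.

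Your reparametrisation sets $m^{-1}=\w_E^{-i}u\,\w_E^{i}\,h$ with $u$ ranging over $(N_n\cap J)\backslash(P_n\cap J)$ and $h$ over $j_0J^1$. Using your own observation that $\w_E$ centralises $J/J^1$ (so that $\w_E^{-i}\mathcal{M}\w_E^{i}=\mathcal{M}$), one finds that $m^{-1}$ ranges over a quotient of $\mathcal{M}j_0$, \emph{not} of $\mathcal{M}$; equivalently $m$ ranges in $j_0^{-1}\mathcal{M}$. Since $j_0\in J$ is an arbitrary element (in general $j_0\notin\mathcal{U}$), the assertion that ``the $j_0$-dependence is absorbed by the $\mathcal{U}$-equivariance of $\J$'' is unjustified: the Bessel function has no transformation law under translation by a generic element of $J$. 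In addition, the left quotient by $N_n\cap J$ becomes, after your substitution, a quotient by $\w_E^{-i}(N_n\cap J)\w_E^{i}$, which is not $N_n\cap J$ in general. So Proposition~\ref{bessel}(\ref{bessel2}) does not apply in the form you wrote, and neither the range of summation nor the quotient matches.

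The repair is simpler than your detour and makes the centralisation property $\w_E^{-i}(P_n\cap J)\w_E^{i}\subseteq\mathcal{M}$ unnecessary. At the end of Step~2 you have (writing $h=j_0 j$ with $j\in J^1$)
\[
F_i(k)=A\int_{j\in J^1}\int_{u}\J(u\,\w_E^{i}j_0\,j)\,\J(j^{-1}\,j_0^{-1}\w_E^{-i}\,u^{-1})\,du\,dj.
\]
Now use only that $\w_E^{i}j_0\in\bold{J}$ normalises $J^1$ with trivial modulus (as $J^1$ is open in the unimodular group $G_n$): the substitution $j\mapsto(\w_E^{i}j_0)^{-1}j(\w_E^{i}j_0)$ slides $j$ to the left of $\w_E^{i}j_0$, giving
\[
F_i(k)=A\int_{j\in J^1}\int_{u}\J(uj\,\w_E^{i}j_0)\,\J(j_0^{-1}\w_E^{-i}\,(uj)^{-1})\,du\,dj
       =A\int_{(N_n\cap J)\backslash\mathcal{M}}\J(m\,\w_E^{i}j_0)\,\J(j_0^{-1}\w_E^{-i}\,m^{-1})\,dm.
\]
After passing from $(N_n\cap J)\backslash\mathcal{M}$ to $\mathcal{U}\backslash\mathcal{M}$ at the cost of a power of $q$, Proposition~\ref{bessel}(\ref{bessel2}) applies with $j_1=j_0^{-1}\w_E^{-i}$ and $j_2=\w_E^{i}j_0$, and $\J(j_1j_2)=\J(1)=1$. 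The point is to keep $\w_E^{i}j_0$ intact as a single element of $\bold{J}$ playing the role of $j_2$, rather than trying to pull $\w_E^{i}$ to the front; then $j_0$ lives harmlessly inside $j_1,j_2$ and cancels in the product $j_1j_2$, instead of contaminating the domain of summation.
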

\begin{proof}
If~$F_l(k)$  is nonzero, then~$W_l(pkj)$ is nonzero at least for some~$p\in P_n$ and~$j\in J$, but then according to Statements~\ref{point1} and~\ref{point2} of Proposition~\ref{furtherproperties}, 
this implies that~$l$ is of the form~$l=in/e$ with~$i\in \{0,\dots,e-1\}$, and~$k\in (K_n\cap P_n)k_i J$. Moreover, from Statement~\ref{point2} of the same proposition, we can write~$k=p_0\w_E^i j_0$ for~$p_0\in P_n$ and~$j_0\in J$. But now notice that for such a~$k$, we have
\begin{align*}F_l(k)&=\int_{j\in J^1}\int_{N_n\backslash P_n} W_l(pp_0\w_E^i j_0j)W_l^\vee(pp_0\w_E^i j_0j) dpdj\\
&=\int_{j\in J^1}\int_{N_n\backslash P_n} W_l(p\w_E^i j_0j)W_l^\vee(p\w_E^i j_0j) dpdj.\end{align*}
hence by Statement~\ref{point3} of Proposition~\ref{furtherproperties}
\begin{align*}F_l(k)&=\int_{j\in J^1}\int_{N_n\backslash N_n(P_n\cap J)} W_l(p\w_E^i j_0j)W_l^\vee(p\w_E^i j_0j) dpdj\\
&=\int_{j\in J^1}\int_{N_n\cap J\backslash P_n\cap J} W_l(m\w_E^i j_0j)W_l^\vee(m\w_E^i j_0j) dmdj\\
&=\int_{j\in J^1}\int_{N_n\cap J\backslash P_n\cap J} \mathcal{J}(m\w_E^i j_0j) \mathcal{J}(j^{-1} j_0^{-1}\w_E^{-i} m^{-1}) dmdj,\end{align*} the last 
equality according to Proposition \ref{bessel}~\ref{dual-bessel}. Now, as~$\bold{J}$ normalises~$J^1$, and as for any~$t\in G_n$ normalising~$J^1$, the automorphism~$j\mapsto tjt^{-1}$ of~$J^1$ has modulus character equal to~$\1$, because~$J^1$ is an open subgroup of the unimodular group~$G_n$, we have 
\begin{align*}
F_l(k)&=\int_{j\in J^1}\int_{N_n\cap J\backslash P_n\cap J} \mathcal{J}(mj\w_E^i j_0) \mathcal{J}(j_0^{-1}\w_E^{-i}(mj)^{-1}) dmdj\\
&=\int_{N_n\cap J\backslash \mathcal{M}} \mathcal{J}(m\w_E^i j_0) \mathcal{J}(j_0^{-1}\w_E^{-i}m^{-1}) dm.
\end{align*}
 We write 
 \[dm(N_n \cap J\backslash (N_n\cap J)H^1)=dm(N_n \cap H^1\backslash H^1)=q^{r_2},\] 
 which is indeed a power of~$q$ as~$H^1$ is pro-$p$.  Moreover, as~$H^1$ is normal in~$J$, and as the integrand is invariant under~$\mathcal{U}$ thanks to Property \ref{bessel1} in Proposition \ref{bessel}
\[F_l(k)=q^{r_2} \int_{\mathcal{U}\backslash \mathcal{M}} \mathcal{J}(m\w_E^i j) \mathcal{J}(j^{-1}\w_E^{-i}m^{-1}) dm=
q^{r_2},\]
the last equality thanks to Statement~\ref{bessel2} of Proposition~\ref{bessel}.  
\end{proof}

\begin{prop}\label{coef}
The coefficient 
\[b_l=\int_{P_n\cap K_n\backslash K_n}\int_{N_n\backslash P_n} W_l(pk)W_l^\vee(pk)dpdk\]
is zero unless~$l=in/e$ for some~$i\in \{0,\dots,e-1\}$, in which case there is an integer~$r$ such that \[b_l=q^r(q^{n/e}-1) q^{-in/e}.\]
\end{prop}
\begin{proof}
By definition,~$b_l$ is equal to 
\[\int_{P_n\cap K_n\backslash K_n/J^1}F_l(k)dk=q^{r_3} \int_{P_n\cap K_n\backslash K_n}F_l(k)dk\] with~$dk(J^1)=q^{r_3}$ ($J^1$ is pro-$p$). So according to Proposition~\ref{F_i}, this is zero if~$l\neq in/e$ for~$i\in\{0,\dots,e-1\}$, and if~$l= 
in/e$ for~$i\in\{0,\dots,e-1\}$, it is equal to 
\begin{align*}
q^{r_3}\int_{P_n\cap K_n\backslash (P_n\cap K_n)k_i J}F_l(k)dk&=q^{r_2+r_3} dk(P_n\cap K_n\backslash (P_n\cap K_n)k_i J)\\&= q^{r}(q^{n/e}-1) q^{-in/e},\end{align*}
where we write~$r=r_1+r_2+r_3$, from Lemma~\ref{volume}. 
\end{proof}

If~$\pi$ is a cuspidal~$R$-representation of~$G_n$ of ramification index~$e$, we denote by~$R(\pi)$ its \emph{ramification group}, that is the group of unramified characters~$\nu$ of~$F^\times$ which satisfy~$\nu\pi\simeq \pi$. It follows from~\cite[6.2.5]{BK93}, that~$R(\pi)$ is isomorphic to the group of~$n/e$-th roots of unity in~$R^\times$, via~$\nu\mapsto \nu(\w_F)$. 

\begin{proof}[Proof of Theorem~\ref{main}]
We first suppose that~$\pi_2\simeq \pi_1^\vee$. By Equation (\ref{I=LI_{(0)}}), the integral~$I(X,W,W^\vee,\1_{\of_F^n})$ is equal to 
\[\frac{q-1}{1-X^n}\int_{(K_n\cap P_n)\backslash K_n} I_{(0)}(X,\rho(k)W,\rho(k)W^\vee)dk.\]
 Now, as~$W W^\vee=\sum_{l\in \Z} W_lW_l^\vee$, by Statement~\ref{point1} of Proposition~\ref{furtherproperties}, and Proposition 
\ref{coef}, we have 
\begin{align*}
\int_{(K_n\cap P_n)\backslash K_n} I_{(0)}(X,\rho(k)W,\rho(k)W^\vee)dk&=\sum_{i=0}^{e-1} b_{in/e} q^{in/e} X^{in/e}\\
= q^r(q^{n/e}-1)\sum_{i=0}^{e-1} X^{in/e}&=q^r(q^{n/e}-1)\frac{1-X^n}{1-X^{n/e}}.\end{align*} 
This gives the equality 
\[I(X,W,W^\vee,\1_{\of_F^n})= (q-1)(q^{n/e}-1)\frac{q^r}{1-X^{n/e}}.\]
On the other hand, and by~\cite[Proposition~8.1]{JPS2}, the 
factor~$L(X,\pi,\pi^{\vee})$ is equal to 
\[L(X,\pi,\pi^{\vee})=\prod_{\nu\in R(\pi)} \frac{1}{1-\nu(\w_f)X}=\frac{1}{1-X^{n/e}}.\]

Now in general, as we supposed that~$L(X,\pi_1,\pi_2)$ is not equal to~$1$, we have~$\pi_2\simeq \chi \pi_1^\vee$ for some unramified character~$\chi$ of~$F^\times$. However, we have 
\[L(X,\pi_1,\pi_2)=L(X,\pi_1,\chi \pi_1^\vee)=L(\chi(\w_F)X,\pi_1,\pi_1^\vee).\]
On the other hand, we have
\begin{align*}I(X,W,\chi W^\vee,\1_{{\of_F^n}})&=I(\chi(\w_F)X,W, W^\vee,\1_{{\of_F^n}})\\
&=(q-1)(q^{n/e}-1)\frac{q^r}{1-(\chi(\w_F)X)^{n/e}}.\end{align*}
However,
\begin{align*}L(X,\pi_1,\pi_2)=L(\chi(\w_F)X,\pi,\pi^\vee)
=\frac{1}{1-(\chi(\w_F)X)^{n/e}},\end{align*} and we are done.
\end{proof}


\section{$L$-factors of banal cuspidal~$\ell$-modular representations}
In this section, we consider the cases~$R=\Fl$, and~$R=\Ql$.  In the~$\Ql$ setting, we continue with the notations of the last section, and note that as~$\psi$ is integral, so are~$\psi_t$ and~$\psi_t^{-1}$.    Our main theorem has the following interesting corollary. 

\begin{cor}\label{lmodcor}
Let~$\tau_1$ and~$\tau_2$ be two banal cuspidal~$\ell$-modular representations of~$G_n$, and~$\pi_1$ and~$\pi_2$ be any cuspidal~$\ell$-adic lifts, then
\[L(X,\tau_1,\tau_2)=r_\ell(L(X,\pi_1,\pi_2)).\]
\end{cor}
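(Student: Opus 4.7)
\textbf{Proof proposal for Corollary~\ref{lmodcor}.}

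My plan is to push the identity of Theorem~\ref{main}, applied to the lifts $\pi_1, \pi_2$, through reduction modulo $\ell$, with the banality hypothesis guaranteeing that the resulting numerical factor remains a unit in $\Fl$. First I would verify that the Paskunas--Stevens Whittaker functions $W_1, W_2$ of Definition~\ref{ExpWhits} attached to $\pi_1, \pi_2$ lie in the integral Whittaker lattices $W_e(\pi_i, \psi_t^{\pm 1})$. This holds because integral cuspidal $\ell$-adic representations admit integral extended maximal simple types, so the trace characters $\chi_i, \chi_i^\vee$ are $\Zl$-valued, $\psi_t$ is integral (since $\theta$ is), and $|\mathcal{N}\backslash\mathcal{U}|$ is a unit in $\Zl$; hence the Bessel functions $\mathcal{J}, \mathcal{J}^\vee$ and consequently $W_1, W_2$ take values in $\Zl$. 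The reductions $r_\ell(W_i)$ then define Whittaker functions for $\tau_i$.

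Next, Theorem~\ref{main} gives
\[I(X, W_1, W_2, \1_{\o_F^n}) = (q-1)(q^{n/e}-1)\mu \cdot L(X, \pi_1, \pi_2),\]
with $\mu$ a power of $q$. Applying the commutation of reduction modulo $\ell$ with integration from Section~\ref{SectionHaar}, coefficient-by-coefficient on the Rankin--Selberg integral, yields
\[I(X, r_\ell W_1, r_\ell W_2, \1_{\o_F^n}) = r_\ell\bigl((q-1)(q^{n/e}-1)\mu\bigr) \cdot r_\ell L(X, \pi_1, \pi_2).\]
The banality of $\tau_i$ is equivalent to $\ell \nmid q^{n/e}-1$, which in particular forces $\ell \nmid q-1$, and $\mu$ is automatically a unit modulo $\ell$ (being a power of $q$), so the scalar on the right belongs to $\Fl^\times$. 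Consequently $r_\ell L(X,\pi_1,\pi_2)$ is a unit multiple of a specific $\ell$-modular Rankin--Selberg integral, placing it inside $L(X,\tau_1,\tau_2)\Fl[X^{\pm 1}]$, which yields the divisibility $L(X,\tau_1,\tau_2) \mid r_\ell L(X,\pi_1,\pi_2)$.

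To upgrade this to equality I would compare denominator degrees. Writing $L(X,\tau_1,\tau_2) = 1/P$ and $r_\ell L(X,\pi_1,\pi_2) = 1/Q$ with $P(0) = Q(0) = 1$, the divisibility forces $Q \mid P$ in $\Fl[X]$, while the explicit $\ell$-adic formula fixes $\deg Q = n/e$, giving $\deg P \geq n/e$. For the reverse inequality, in the banal case the ramification group $R(\tau_1)$ has order $n/e$, matching $R(\pi_1)$, and running the end of the proof of Theorem~\ref{main} for $\tau_1, \tau_2$ (whose finite-group ingredients survive reduction modulo $\ell$ precisely because banality keeps their orders coprime to $\ell$) yields the analogous formula $L(X, \tau_1, \tau_2) = (1 - (\chi'(\w_F)X)^{n/e})^{-1}$, where $\chi'$ is the unramified twist with $\tau_2 \simeq \chi'\tau_1^\vee$. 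Hence $\deg P = n/e = \deg Q$, and matching constant terms gives $P = Q$, i.e., the claimed equality.

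The main obstacle is the verification of this $\ell$-modular analogue of Theorem~\ref{main}, or equivalently the bound $\deg P \leq n/e$: each ingredient in the proof of Theorem~\ref{main} (the Paskunas--Stevens explicit Whittaker functions, the Bessel function identity of Proposition~\ref{bessel}, and the volume computation of Lemma~\ref{volume}) should go through for $R = \Fl$ in the banal range, since the relevant finite group orders are coprime to $\ell$ and the Haar measure normalisation was designed to behave well under reduction, but making each of these rigorous requires a careful $\ell$-modular adaptation.
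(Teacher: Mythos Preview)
Your first two paragraphs match the paper's argument almost exactly: verify that $W_1,W_2$ are integral, reduce the identity of Theorem~\ref{main} modulo~$\ell$, and use banality to see that $r_\ell(\mu)(q-1)(q^{n/e}-1)\in\Fl^\times$, so that the reduced integral equals a unit times $r_\ell(L(X,\pi_1,\pi_2))$; this places $r_\ell(L(X,\pi_1,\pi_2))$ in the fractional ideal $(L(X,\tau_1,\tau_2))$, giving one divisibility.

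Where you diverge is in the reverse divisibility. The paper does not redo any computation modulo~$\ell$: it simply invokes the general fact, established in \cite[Theorem~3.14]{KM14} and recalled in the introduction, that $L(X,\tau_1,\tau_2)$ always divides $r_\ell(L(X,\pi_1,\pi_2))$ for cuspidal lifts. Combined with the divisibility just obtained, this finishes the proof in one line. Your proposed route---establishing $\deg P\leqslant n/e$ by running Theorem~\ref{main} over~$\Fl$---is in principle workable (the paper itself remarks after the corollary that the whole proof of Theorem~\ref{main} goes through \emph{mutatis mutandis} in the banal $\ell$-modular setting), but it is substantially more work than needed and, as you note yourself, constitutes the ``main obstacle''. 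In particular, your appeal to ``the end of the proof of Theorem~\ref{main}'' for the explicit formula $L(X,\tau_1,\tau_2)=(1-(\chi'(\w_F)X)^{n/e})^{-1}$ tacitly requires an $\ell$-modular analogue of \cite[Proposition~8.1]{JPS2}, which is an independent input, not a consequence of the test-vector computation. So your argument is not wrong, but it replaces a one-line citation by a nontrivial adaptation; the paper's approach is the cleaner one.
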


\begin{proof}
We already noticed in Section \ref{local factors} that if~$L(X,\pi_1,\pi_2)$ is equal to~$1$, then 
\[L(X,\tau_1,\tau_2)=r_\ell(L(X,\pi_1,\pi_2))= 1,\]
whether~$\tau_1$ and~$\tau_2$ are banal or not. Hence we only need to focus on the case when~$L(X,\pi_1,\pi_2)$ is not equal to~$1$, that is~$\pi_2\simeq \chi\pi_1^\vee$ for some unramified character~$\chi$. Let~$W$ be the Stevens-Paskunas explicit Whittaker function associated to an extended maximal simple type of~$\pi_1$ as in the statement of Theorem~\ref{main}.
\begin{LM}
The explicit Whittaker functions~$W$ and~$\chi W^\vee$ lie in the~$\Zl$-submodules~$W_e(\pi_1,\psi_t)$ and~$W_e(\pi_2,\psi_t^{-1})$ respectively.
\end{LM}
\begin{proof}
As in the proof of Theorem~\ref{main}, the representation~$\pi_1$ contains an extended maximal simple type~$(\bJ_1,\Lambda_1)$ and~$W$ is chosen to be the Paskunas--Stevens Whittaker function of Definition~\ref{ExpWhits} relative to this data.  As~$\pi_1$ is integral,~$\Lambda_1$ is integral by the end of Section \ref{SimpleTypes}.  This implies that the trace character~$\rho_{\Lambda_1}$ of~$\Lambda_1$ has values in~$\Zl$.  In particular the Bessel function~$\mathcal{J}_1$ (see Definition~\ref{Bessel}) associated to the pair 
$(\bJ_1,\Lambda_1)$ takes values in~$\Zl$. Hence, as~$\psi_t$ is integral,~$W\in W_e(\pi_1,\psi_t)$ (see Definition~\ref{ExpWhits}). Now,~$\pi_2$ is of the form~$\chi\pi_1^{\vee}$ with~$\chi$ an unramified character of~$F^\times$ (which is integral as~$\chi$ is unramified), so Proposition \ref{bessel} \ref{dual-bessel} implies that the Bessel function 
$\chi\mathcal{J}_1^\vee$ is integral. We  
conclude that~$\chi W^\vee$ belongs to~$W_e(\pi_2,\psi_t^{-1})$ (see Definition~\ref{ExpWhits} again).
\end{proof}
Granted~$W\in W_e(\pi_1,\psi_t)$ and~$\chi W^\vee\in W_e(\pi_2,\psi_t)$, we have 
\begin{align*}r_\ell(q^r(q^{n/e}-1))r_\ell(L(X,\pi_1,\pi_2)) &=r_\ell(I(X,W,\chi W^\vee,\1_{\of_F^n}))\\&= 
I(X,r_\ell(W),r_\ell(\chi W^{\vee}),r_\ell(\1_{\of_F^n})).\end{align*} 
Notice that~$r_\ell(q^r(q-1)(q^{n/e}-1))$ is nonzero if and only if~$\pi_1$ (hence~$\pi_2$) is banal by the end of Section \ref{SimpleTypes}. As the integral~$I(X,r_\ell(W),r_\ell(\chi W^\vee),r_\ell(\1_{\of_F^n}))$ belongs 
to the fractional ideal~$(L(X,\tau_1,\tau_2))$ of~$\Fl[X^{\pm1}]$, we deduce that $r_\ell(L(X,\pi_1,\pi_2))$ divides~$L(X,\tau_1,\tau_2)$. As in any case, thanks to~\cite[Theorem~3.13]{KM14}, the~$L$-factor~$L(X,\tau_1,\tau_2)$ divides~$r_\ell(L(X,\pi_1,\pi_2))$, we deduce the desired equality.  
\end{proof}

\begin{rem} As noticed in the introduction and Section \ref{local factors}, the analogue of Corollary \ref{lmodcor} is also true when~$\pi_1$ and~$\pi_2$ are cuspidal representations of general linear groups of different ranks as the~$L$-factors are all trivial. 
\end{rem}

\begin{acknowledgements}
We thank Paul Broussous and Shaun Stevens for stimulating discussions, and Jim Cogdell for bringing the test vector problem to our attention.  We thank the referee for a very thorough review which improved greatly the presentation and clarified certain parts of the arguments.  This work has benefited from support from the Heilbronn Institute for Mathematical Research, GDRI 571 ``French-British-German network in Representation Theory'', and from the grant ANR-13-BS01-0012 FERPLAY.
\end{acknowledgements}

\bibliographystyle{plain}
\bibliography{Modlfactors}

\end{document}